\newcommand{\bigH}{\mathcal{H}}
\newcommand{\bigF}{\mathcal{F}}
\renewcommand{\theequation}
\def\H{\mathbb H}
\def\R{\mathbb R}
\def\C{\mathbb C}
\def\(s){\mathscr S(\R\times\R)}
\newtheorem{thm}{Theorem}[section]
 \newtheorem{prop}[thm]{Proposition}
 \theoremstyle{definition}
 \newtheorem{defn}[thm]{Definition}
 \theoremstyle{remark}
 \newtheorem{rem}[thm]{Remark}
 \numberwithin{equation}{section}
\newcommand{\rr}{\mathbb{R}}
\def\C{\mathbb C}
\begin{document}

\title[Fock space in the slice hyperholomorphic setting]
{The Fock space in the slice hyperholomorphic setting}

\author[D. Alpay]{Daniel Alpay}
\address{(DA) Department of Mathematics\\
Ben-Gurion University of the Negev\\
Beer-Sheva 84105 Israel} \email{dany@math.bgu.ac.il}
\author[F. Colombo]{Fabrizio Colombo}
\address{(FC) Politecnico di
Milano\\Dipartimento di Matematica\\ Via E. Bonardi, 9\\20133
Milano, Italy}
\email{fabrizio.colombo@polimi.it}
\author[I. Sabadini]{Irene Sabadini}
\address{(IS) Politecnico di
Milano\\Dipartimento di Matematica\\ Via E. Bonardi, 9\\20133
Milano, Italy}
\email{irene.sabadini@polimi.it}
\author[G. Salomon]{Guy Salomon}
\address{(GS) Department of Mathematics\\
Ben-Gurion University of the Negev\\
Beer-Sheva 84105 Israel} \email{guysal@math.bgu.ac.il}

\thanks{D. Alpay thanks the Earl Katz family for endowing the chair
which supported his research, and the Binational Science
Foundation Grant number 2010117. F. Colombo and I. Sabadini
acknowledge the Center for Advanced Studies of the Mathematical
Department of the Ben-Gurion University of the Negev for the
support and the kind hospitality during the period in which part
of this paper has been written.}
\subjclass{MSC:  30G35, 30H20}
\keywords{Fock space, slice hyperholomorphic functions, quaternions, Clifford algebras.}

\begin{abstract}
In this paper we introduce and study some basic properties of the Fock space
(also known as Segal-Bargmann space) in the slice hyperholomorphic setting.
We discuss both
 the case of slice regular functions over quaternions
and also the case of slice monogenic functions with values in a
Clifford algebra. In the specific setting of quaternions, we also
introduce the full Fock space. This paper can be seen as the
beginning of the study of infinite dimensional analysis in the
quaternionic setting.
\end{abstract}

\maketitle

\section{Introduction}
Fock spaces are a very important tool in quantum mechanics, and
also in its quaternionic formulation; see the book of Adler
\cite{adler} and the paper \cite{MR1352898}. Roughly speaking, they can be
seen as the completion of the direct sum of the symmetric or
anti-symmetric, or full tensor powers of a Hilbert space which, from the
point of view of Physics, represents a single particle. There is
an alternative description of the Fock spaces in the holomorphic
setting which, in this framework, are also known as
Segal-Bargmann spaces.\smallskip

In this note we work  first in the setting of slice
hyperholomorphic functions, namely either we work with slice
regular functions (these are functions defined on subsets of the
quaternions with values in the quaternions) or with slice
monogenic functions (these functions are defined on the Euclidean
space $\mathbb{R}^{n+1}$ and have values in the Clifford algebra
$\mathbb{R}_n$), see the book \cite{bookfunctional}.\smallskip

Slice hyperholomorphic functions have been introduced quite
recently but they have already several applications, for example
in Schur analysis and to define some functional calculi. The
application to Schur analysis  started with the paper \cite{acs1}
and it is rapidly growing, see for example \cite{abcsNP, acls,
acsxx, acs2, acs3}.\smallskip

The applications to the functional calculus ranges from the
so-called S-functional calculus, which works for $n$-tuples non
necessarily commuting operators, to a quaternionic version of the
classical Riesz-Dunford functional calculus, see \cite{ds}. The
literature on slice hyperholomorphic functions and the related
functional calculi is wide, and we refer the reader to the book
\cite{bookfunctional} and the references therein.\smallskip

We note that  Fock spaces have been treated in the  more
classical setting of monogenic functions, see for example the
book \cite{DSS}. In the treatment in \cite{DSS} no tensor products of Hilbert Clifford
modules are involved. In the framework of slice hyperholomorphic
analysis  we have already introduced and studied the Hardy spaces
(see \cite{acs2,abcsNP,acls}), and Bergman spaces (see
\cite{CGCS1,INDAM,CGCS2}).  Here we begin the study  of the main
properties of the Fock spaces in the slice hyperholomorphic
setting.\smallskip

We start by recalling the definition of the Fock space in the classical complex analysis case
(for the origins of the theory see \cite{Fock}).\\
For $n\in \mathbb{N}$ let $z=(z_1,\ldots,z_n)\in \mathbb{C}^n$ where $z_j=x_j+iy_j$, $x_j$,
$y_j\in \mathbb{R}$  ($j=1,...,n$) and denote by
$$
d\mu(z):=\pi^{-n} \Pi_{j=1}^ndx_jdy_j
$$
the normalized Lebesgue measure on $\mathbb{C}^n$. The Fock space of holomorphic functions $f$
 defined on $\mathbb{C}^n$ is
\begin{equation}
\label{fockgeometric}
 F_n:=\{\ f:\mathbb{C}^n\mapsto \mathbb{C}\ {\rm such \ that}\ \
 \int_{\mathbb{C}^n} |f(z)|^2e^{-|z|^2}d\mu(z)<\infty \ \}.
 \end{equation}
 The space $F_n$ with the scalar product
 $$
 \langle f,g\rangle_{F_n}= \int_{\mathbb{C}^n} f(z) \overline{g(z)}e^{-|z|^2}d\mu(z)
 $$
 becomes a Hilbert space and the norm is
 $$
 \|f\|^2_{F_n}=\int_{\mathbb{C}^n} |f(z)|^2e^{-|z|^2}d\mu(z),\ \ \ f\in F_n.
 $$
The space $F_n$ is called boson Fock space and since we will treat this case in the sequel we will
 refer to it simply as  Fock space.
One of its most important properties is that it is a reproducing kernel Hilbert space.
If we denote by $\langle\cdot,\cdot\rangle_{\mathbb{C}^n}$ the natural scalar product in $\mathbb{C}^n$ defined by
$\langle u,v\rangle_{\mathbb{C}^n}:=\sum_{j=1}^n u_j\overline{v}_j$, for every $u$,
$v\in \mathbb{C}^n$ we define the function
\begin{equation}
\label{enuv}
\psi_u(z)=e^{\langle u,v\rangle_{\mathbb{C}^n}}=e^{\sum_{j=1}^n
u_j\overline{v}_j}.
\end{equation}
We have the reproducing property
$$
\langle f,\psi_u\rangle_{F_n}=f(u),\ \ \ \ {\rm for\ all}\ \ \ f\in F_n.
$$
So there are two equivalent characterizations of the Fock space $F_n$; one geometric, in terms of
integrals (see \eqref{fockgeometric}), and one analytic, obtained the reproducing kernel property (or,
directly from \eqref{fockgeometric}): an entire function
$f(z)=\sum_{m\in\mathbb N_0^n}a_mz^m$ of $n$ complex variables $z=(z_1,\ldots, z_n)$ is in $F_n$ if and only if its Taylor
coefficients satisfy
\[
\sum_{m\in\mathbb N_0^n}m!|a_m|^2<\infty,
\]
where we have used the multi-index notation. A third characterization is of importance, namely (with appropriate
identification, and with $\circ$ denoting the symmetric tensor product)
\[
F_n=\oplus_{k=0}^\infty (\mathbb C^n)^{\circ k}.
\]
In this paper we will address some aspects of these three
characterizations in the quaternionic and Clifford algebras settings.\\

The paper consists of four sections besides the introduction. In
Section \ref{sec2} we give a brief survey of infinite dimensional
analysis. In Section \ref{sec3} we study the quaternionic Fock
space in one quaternionic variable. We then discuss, in Section
\ref{sec4}, the full Fock space. In order to define it, we need to
study tensor products of quaternionic two-sided Hilbert spaces.
Tensor product of quaternionic vector spaces have been treated in
the literature at various level, see e.g. \cite{ak},
\cite{MR1352898,MR768240}. This section in particular opens the
way to study a quaternionic infinite dimensional analysis. The
last section considers the case of slice monogenic functions.

\section{A brief survey of infinite dimensional analysis}
\setcounter{equation}{0}
\label{sec2}

There are various ways to introduce infinite dimensional
analysis. We mention here four related approaches:\\

\noindent 1. \underline{The white noise space and the Bochner-Minlos
theorem:} The formula
\begin{equation}
e^{-\frac{t^2}{2}}=\frac{1}{\sqrt{2\pi}}\int_{\mathbb
R}e^{-\frac{u^2}{2}}e^{-itu}du
\end{equation}
is an illustration of Bochner's theorem. It is well known that
there is no such formula when $\mathbb R$ is replaced by an
infinite dimensional Hilbert space. On the other hand, the
Bochner-Minlos theorem asserts that there exists a probability
measure $P$ on the space $\mathcal S^\prime$ of real tempered
distributions such that
\begin{equation}
\label{gaussian}
e^{-\frac{\|s\|_2^2}{2}}=\int_{\mathcal
S^\prime}e^{i\langle s^\prime, s\rangle}dP(s^\prime).
\end{equation}
In this expression, $s$ is belongs to the space $\mathcal S$ of
real-valued Schwartz function, the duality between $\mathcal S$
and $\mathcal S^\prime$ is denoted by $\langle s^\prime,s
\rangle$ and $\|\cdot\|_2$ denotes the $\mathbf L_2(\mathbb R,
dx)$ norm.\smallskip

The probability space $\mathbf L_2(\mathcal S^\prime, P)$ is
called the white noise space, and is denoted  by $\mathcal W$.
Denoting by $Q_s$ the map $s^\prime\mapsto \langle s^\prime,
s\rangle$ we see that \eqref{gaussian} induces an isometry, which
we denote $Q_f$, from the Lebesgue space $\mathbf L_2(\mathbb R,
dx)$ into the white noise space. We now give an important family
of orthogonal basis $\left(H_\alpha, \ \alpha \in \ell \right)$ of
the white noise space, indexed by the set $\ell$ of sequences
$(\alpha_1,\alpha_2,\ldots)$, with entries in
\[
\mathbb N_0=\left\{ 0,1,2,3,\ldots\right\},
\]
where $\alpha_k\not =0$ for only a finite number of indices $k$.
Let $h_0,h_1,\ldots$ denote the Hermite polynomials, and let
$\xi_1,\xi_2,\ldots$ be an orthonormal basis of $\mathbf
L_2(\mathbb R, dx)$ (typically, the Hermite functions, but other
choices are possible). Then

\begin{equation}
H_\alpha = \prod _ {k=1} ^\infty {h_{\alpha _k} (Q_{{\xi}_k} )},
\end{equation}
and, with the multi-index notation
\[
\alpha!=\alpha_1!\alpha_2!\cdots,
\]
we have
\begin{equation}
\label{eq:fock1}
\|H_\alpha\|_{\mathcal W}^2=\alpha!.
\end{equation}
The decomposition of an element $f\in \mathcal W$ along the basis
$(H_\alpha)_{\alpha \in \ell}$ is called the chaos expansion.\\

\noindent 2. \underline{The Bargmann space in infinitely many variables:}
When in \eqref{enuv}, $\mathbb C^n$ is replaced by $\ell_2(\mathbb
N)$, we have the function
\begin{equation}
\label{euvl2}
\psi_u(z)=e^{\langle u,v\rangle_{\ell_2(\mathbb
N)}}=e^{\sum_{j=1}^\infty u_j\overline{v}_j}.
\end{equation}
The map $H_\alpha\mapsto z^\alpha$ is called the Hermite
transform, and is unitary from the white noise space onto the
reproducing kernel Hilbert space with reproducing kernel
\eqref{euvl2}.\\

\noindent 3. \underline{The Fock space:} We denote by $\circ$ the symmetrized tensor product and
by
\[
\Gamma^\circ(\bigH)=\oplus_{n=0}^\infty\mathcal H^{\circ n},
\]
the symmetric Fock space associated to a Hilbert space $\mathcal H$.
Then, $\Gamma^\circ( \mathbf L_2(\mathbb R, dx))$ can be identified with the white noise space via the
Wiener-It\^{o}-Segal transform defined as follows (see \cite[p. 165]{MR1851117}):
\[
\xi_\alpha=\xi_{i_1}^{\circ \alpha_{i_1}}\circ \cdots \circ \xi_{i_m}^{\circ \alpha_{i_m}} \in \bigH^{\circ n}\quad\mapsto H_\alpha
\]
This is the starting point of our approach to quaternionic
infinite dimensional analysis; see Section \ref{sec4}.\\

\noindent 4. \underline{The free setting. The full Fock space:}
It is defined by
\[
\Gamma(\bigH)=\oplus_{n=0}^\infty\mathcal H^{\otimes n},
\]
and allows to develop the free analog of the white noise space theory. See \cite{MR1217253,MR2227827} for background for the
free setting. See \cite{MR3038506} for recent applications to the theory of non commutative stochastic distributions.\\

We refer in particular to the papers
\cite{MR0076317,MR0077908,MR0157250,bargmann} and the books
\cite{MR1244577,MR2444857,HiHi93,MR1408433,MR1851117,MR1746976} for more information on these various aspects.

\section{The Fock space in the slice regular case}
\setcounter{equation}{0}
\label{sec3}

 The algebra of quaternions is indicated by the symbol $\mathbb{H}$.
The imaginary units  in $\mathbb{H}$ are denoted by $i$, $j$ and $k$, respectively,
 and an element in $\mathbb{H}$ is of the form $q=x_0+ix_1+jx_2+kx_3$, for $x_\ell\in \mathbb{R}$.
The real part, the imaginary part and the modulus of a quaternion are defined as
${\rm Re}(q)=x_0$, ${\rm Im}(q)=i x_1 +j x_2 +k x_3$, $|q|^2=x_0^2+x_1^2+x_2^2+x_3^2$, respectively.
The conjugate of the quaternion $q=x_0+ix_1+jx_2+kx_3$ is defined by
$\bar q={\rm Re }(q)-{\rm Im }(q)=x_0-i x_1-j x_2-k x_3$
 and it satisfies
$$
|q|^2=q\bar q=\bar q q.
$$
The unit sphere of purely imaginary quaternions is
$$
\mathbb{S}=\{q=ix_1+jx_2+kx_3\ {\rm such \ that}\
x_1^2+x_2^2+x_3^2=1\}.
$$
Notice that if $I\in\mathbb{S}$, then
$I^2=-1$; for this reason the elements of $\mathbb{S}$ are also called
imaginary units. Note that $\mathbb{S}$ is a 2-dimensional sphere in $\mathbb{R}^4$.
Given a nonreal quaternion $q=x_0+{\rm Im} (q)=x_0+I |{\rm Im} (q)|$, $I={\rm Im} (q)/|{\rm Im} (q)|\in\mathbb{S}$, we can associate to it the 2-dimensional sphere defined by
$$
[q]=\{x_0+I  {\rm Im} (q)|\  :\  \ I\in\mathbb{S}\}.
$$
This sphere has center at the real point $x_0$ and radius $|{\rm Im} (q)|$.
An element in the complex plane $\mathbb{C}_I=\mathbb{R}+I\mathbb{R}$ is denoted by $x+Iy$.

\begin{defn}[Slice regular (or slice hyperholomorphic) functions] Let $U$ be an open set in
$\mathbb{H}$ and consider  a real differentiable function $f:U \to \mathbb{H}$.
Denote by  $f_I$ the restriction of $f$  to the complex plane $\mathbb{C}_I$.
\\
The function  $f$  is (left) slice regular (or (left) slice hyperholomorphic) if, for every $I \in
\mathbb{S}$, it satisfies:
$$\overline{\partial}_If_I(x+Iy):=\frac{1}{2}\left(\frac{\partial}{\partial x}
+I\frac{\partial}{\partial y}\right)f_I(x+Iy)=0,$$
on $U \cap \mathbb{C}_I$. The set of (left) slice regular functions on $U$ will be denoted by $\mathcal{R}(U)$.
\\
The function  $f$ is
right slice regular (or right slice hyperholomorphic)  if, for every $I \in
\mathbb{S}$, it satisfies:
$$(f_I\overline{\partial}_I)(x+Iy):=\frac{1}{2}\left(\frac{\partial}{\partial x}f_I(x+Iy)
+\frac{\partial}{\partial y}f_I(x+Iy)I\right)=0,$$
 on $U \cap \mathbb{C}_I$.
\end{defn}
The class of slice hyperholomorphic quaternionic valued functions is important since power series centered at real points are slice hyperholomorphic:
if  $B=B(y_0,R)$ is  the open ball centered at the real point $y_0$ and radius $R>0$ and if
$f:\,B \to \mathbb{H}$ is  a left slice regular function then $f$ admits the power series expansion
$$
f(q)=\sum_{m=0}^{+\infty} (q-y_0)^m\frac{1}{m!}\frac{\partial^m f}{\partial x^m}(y_0),
$$
converging on $B$.

A main property of the slice hyperholomorphic functions is the so-called Representation Formula (or Structure Formula).
It holds on a particular class of open sets which are described below.
\begin{defn}[Axially symmetric domain]\label{def_axially}
Let $U \subseteq \mathbb{H}$. We say that $U$ is
\textnormal{axially symmetric} if, for all $x+Iy \in U$, the whole
2-sphere $[x+Iy]$ is contained in $U$.
\end{defn}

\begin{defn}[Slice domain]\label{defs-domain}
Let $U \subseteq \mathbb{H}$ be a domain in $\mathbb{H}$. We say that $U$ is a
\textnormal{slice domain (s-domain for short)} if $U \cap \mathbb{R}$ is non empty and
if $ U\cap\mathbb{C}_I$ is a domain in $\mathbb{C}_I$ for all $I \in \mathbb{S}$.
\end{defn}

\begin{thm}[Representation Formula]\label{formulaQUATER}
Let $U$ be an axially symmetric s-domain $U \subseteq  \mathbb{H}$.

Let $f$ be a (left) slice regular function on $U$.  Choose any
$J\in \mathbb{S}$.  Then the following equality holds for all $q=x+yI \in U $:
\begin{equation}\label{distribution}
f(x+Iy) =\frac{1}{2}\Big[   f(x+Jy)+f(x-Jy)\Big] +I\frac{1}{2}\Big[ J[f(x-Jy)-f(x+Jy)]\Big].
\end{equation}
\end{thm}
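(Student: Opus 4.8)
The plan is to verify the formula by exhibiting its right-hand side as a slice regular function that coincides with $f$ on the slice $\mathbb{C}_J$, and then to invoke the identity principle for slice regular functions on s-domains. First I would name the right-hand side: for $q = x + Iy \in U$ set
\[
\psi(x+Iy) := \frac{1}{2}\big[f(x+Jy)+f(x-Jy)\big] + I\,\frac{1}{2}\big[J\big(f(x-Jy)-f(x+Jy)\big)\big].
\]
The very first thing to check is that $\psi$ is well defined on all of $U$: since $U$ is axially symmetric, for every $x+Iy \in U$ the whole sphere $[x+Iy]$ lies in $U$, and in particular the two points $x+Jy$ and $x-Jy$ (corresponding to $\pm J \in \mathbb{S}$) belong to $U$, so the values $f(x\pm Jy)$ appearing above make sense. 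This is precisely where axial symmetry is used.

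Next I would verify that $\psi$ agrees with $f$ on the plane $\mathbb{C}_J$. Setting $I = J$ and using $J^2 = -1$, the two bracketed terms combine, after cancellation, to give $\psi(x+Jy) = f(x+Jy)$ for every $x+Jy \in U \cap \mathbb{C}_J$; this is a one-line computation.

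The heart of the argument, and the step I expect to be the main obstacle, is to show that $\psi$ is itself (left) slice regular on $U$. Writing $a(x,y) = \tfrac12[f(x+Jy)+f(x-Jy)]$ and $b(x,y) = \tfrac12 J[f(x-Jy)-f(x+Jy)]$, so that $\psi(x+Iy) = a(x,y) + I\,b(x,y)$, I would compute $\overline{\partial}_I \psi_I = \tfrac12(\partial_x + I\partial_y)(a+Ib)$ and use $I^2 = -1$ to split it into the two real-component equations $\partial_x a - \partial_y b = 0$ and $\partial_x b + \partial_y a = 0$. To establish these I would set $F(x,y) = f(x+Jy)$, record that the slice regularity of $f$ on $\mathbb{C}_J$ means $\partial_y F = J\,\partial_x F$, and note that $x-Jy$ contributes the reflected derivatives $\partial_x F(x,-y)$ and $-\partial_y F(x,-y)$ via the chain rule. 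Substituting $\partial_y F = J\,\partial_x F$ and carefully tracking the left multiplications by $J$ (here the noncommutativity of $\mathbb{H}$ must be handled with care, since every product is on the left) collapses both equations to identities. The delicate points are the correct bookkeeping of the chain rule for $f(x-Jy)$ and the fact that $J(-J) = 1$ while $JJ = -1$, which is exactly what makes the even and odd parts in $y$ cancel in the right way.

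Finally, with $\psi$ slice regular on the axially symmetric s-domain $U$ and $\psi = f$ on the two-dimensional open set $U \cap \mathbb{C}_J$, I would apply the identity principle for slice regular functions: since $U$ is a slice domain, so that $U \cap \mathbb{R} \neq \emptyset$ ties all the slices together, and since $\psi - f$ vanishes on a set with accumulation points in $U \cap \mathbb{C}_J$, it must vanish identically. Hence $\psi = f$ on all of $U$, which is the asserted Representation Formula.
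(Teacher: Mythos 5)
The paper states this theorem without proof: the Representation Formula is imported as a known result from the slice hyperholomorphic literature (see \cite{bookfunctional}), so there is no in-paper argument to compare against. Your proposal reconstructs the standard proof correctly: axial symmetry guarantees $x\pm Jy\in U$; the right-hand side $\psi(x+Iy)=a(x,y)+Ib(x,y)$ reduces to $f$ on $\mathbb{C}_J$ via $J^2=-1$; and the Cauchy--Riemann system $\partial_x a-\partial_y b=0$, $\partial_y a+\partial_x b=0$ does follow from $\partial_y F=J\,\partial_x F$ with $F(x,y)=f(x+Jy)$ together with the chain rule for $F(x,-y)$ (I checked the bookkeeping: $\partial_y b=\tfrac12 J[-J(\partial_xF)(x,-y)-J(\partial_xF)(x,y)]=\partial_x a$, and similarly for the second equation). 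The concluding appeal to the identity principle on the s-domain $U$, via $U\cap\mathbb{R}\neq\emptyset$ propagating equality from the slice $\mathbb{C}_J$ to every other slice, is exactly the standard mechanism and is a legitimate external ingredient. Two small points you should make explicit: (i) $\psi$ must also be checked to be well defined with respect to the ambiguity $x+Iy=x+(-I)(-y)$ (the formula is invariant under $(I,y)\mapsto(-I,-y)$, and reduces to $f(x)$ when $y=0$, so this is a one-line verification); (ii) since $a,b$ are $\mathbb{H}$-valued and $I$ acts by left multiplication, the splitting of $\overline{\partial}_I\psi_I$ into the two equations is used only in the sufficient direction (each part vanishes, hence the sum does), which is all you need. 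With those remarks your argument is complete.
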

\begin{rem}{\rm One of the applications of the Representation Formula is the fact that any function defined on an open set $\Omega_I$ of a complex plane
$\mathbb C_I$ which belongs to the kernel of the Cauchy-Riemann operator can be uniquely extended to a slice hyperholomorphic function defined on the axially symmetric completion of $\Omega_I$ (see \cite{bookfunctional}).
}
\end{rem}
We now define the Fock space in this framework.
\begin{defn}[Slice hyperholomorphic quaternionic Fock space]\label{Fock_S-B}
Let $I$ be any element in $\mathbb{S}$. Consider the set
$$
\mathcal{F}(\H)=\{f\in\mathcal{R}(\H)\ |\  \int_{\C_I} e^{-|p|^2} |f_I(p)|^2 d\sigma(x,y)<\infty\}
$$
where $p=x+Iy$, $d\sigma(x,y):=\frac{1}{\pi}dxdy$.
We will call $\mathcal{F}(\H)$ (slice hyperholomorphic) quaternionic Fock space.
\end{defn}
We endow $\mathcal{F}(\H)$ with the inner product
\begin{equation}\label{inner}
\langle f, g\rangle:=\int_{\C_I} e^{-|p|^2} \overline{g_I(p)}f_I(p) d\sigma(x,y);
\end{equation}
we will show below that this definition, as well as the definition of Fock space, do not depend on the imaginary unit $I\in\mathbb{S}$.\\
The norm induced by the inner product is then
$$
\|f\|^2=\int_{\C_I} e^{-|p|^2} |f_I(p)|^2 d\sigma(x,y).
$$

We have the following result:
\begin{prop}
The quaternionic Fock space $\mathcal{F}(\H)$ contains the monomials $p^n$, $n\in\mathbb{N}$ which form an orthogonal basis.
\end{prop}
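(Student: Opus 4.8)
The plan is to exploit the fact that, since the domain is all of $\H$, everything can be computed on a single slice $\C_I$, which is a real subalgebra isomorphic to $\C$ with $I$ playing the role of the imaginary unit. On $\C_I$ the defining condition $\overline{\partial}_I f_I = 0$ is exactly the Cauchy--Riemann equation, so each restriction $f_I$ is an $\H$-valued holomorphic function, while the weight $e^{-|p|^2}$ and the measure $d\sigma$ are real-valued; this lets me borrow the classical Segal--Bargmann computations almost verbatim.

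First I would verify membership and orthogonality by direct integration in polar coordinates $p=re^{I\theta}$. Since $p^n\in\C_I$ one has $\overline{p^m}\,p^n=r^{n+m}e^{I(n-m)\theta}$, and the angular integral $\int_0^{2\pi}e^{I(n-m)\theta}\,d\theta$ vanishes unless $n=m$, giving orthogonality; for $n=m$ the radial integral $2\int_0^{\infty}e^{-r^2}r^{2n+1}\,dr$ evaluates to $n!$ after the substitution $u=r^2$. Hence $p^n\in\mathcal{F}(\H)$ with $\|p^n\|^2=n!$, and the family $\{p^n\}_{n\in\N}$ is orthogonal.

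The substantive point is completeness. Here I would fix $J\in\mathbb{S}$ with $JI=-IJ$ and use the splitting $\H=\C_I\oplus\C_I J$ to write $f_I=F+GJ$ with $F,G\colon\C_I\to\C_I$. Because $J$ is constant and anticommutes with $I$, one checks $\overline{\partial}_I(GJ)=(\overline{\partial}_I G)J$, so $\overline{\partial}_I f_I=0$ forces both $F$ and $G$ to be holomorphic; moreover $F$ and $GJ$ are orthogonal in $\H\cong\R^4$, whence $|f_I|^2=|F|^2+|G|^2$ and $\|f\|^2=\|F\|^2+\|G\|^2$, each summand being a classical Fock norm on $\C_I\cong\C$. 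Thus $f\in\mathcal{F}(\H)$ if and only if $F,G$ lie in the ordinary Segal--Bargmann space, where the monomials are already known to form an orthogonal basis. Expanding $F=\sum_n p^n\alpha_n$ and $G=\sum_n p^n\beta_n$ (convergent in the respective Fock norms, with $\alpha_n,\beta_n\in\C_I$) and recombining, I obtain $f_I=\sum_n p^n(\alpha_n+\beta_n J)$, a quaternionic power series with right coefficients $a_n=\alpha_n+\beta_n J$ converging to $f$ in $\mathcal{F}(\H)$. This exhibits every $f$ in the closed span of the $p^n$ and proves completeness.

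I expect the main obstacle to be precisely this completeness step, and specifically the care needed to justify the reduction to the scalar theory: one must check that the $\C_I$-holomorphy of $F$ and $G$ really follows from slice regularity under the splitting, that the norm splits orthogonally as $|F|^2+|G|^2$, and that the two classical Fock expansions recombine into the slice power series of $f$ with convergence in the quaternionic norm. Once this reduction is in place, the orthogonal-basis conclusion is immediate from the complex case, while the independence of the construction from the choice of $I$ is deferred to the results that follow.
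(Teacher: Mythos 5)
Your proposal is correct, and its first half (polar coordinates, the angular integral killing the cross terms, the radial integral giving $\|p^n\|^2=n!$) is exactly the paper's computation. Where you genuinely diverge is the completeness step. The paper stays entirely inside the quaternionic framework: it expands $f(p)=\sum_{m}p^m a_m$ as a power series, exchanges the sum with the integral over the disk $|p|<r$ (justified by uniform convergence on compact sets), and obtains $\langle f,p^n\rangle=n!\,a_n$, so that $f\perp p^n$ for all $n$ forces $f\equiv 0$; this is the ``orthogonal complement of the span is trivial'' criterion, and it has the side benefit of identifying the Fourier coefficients of $f$ explicitly. You instead invoke the Splitting Lemma: writing $f_I=F+GJ$ with $J\perp I$, noting $\overline{\partial}_I(GJ)=(\overline{\partial}_I G)J$ (only constancy of $J$ is needed here; the anticommutation is what makes the decomposition $\H=\C_I\oplus\C_I J$ direct and the two pieces orthogonal, so that $|f_I|^2=|F|^2+|G|^2$), and reducing to two copies of the classical Segal--Bargmann space over $\C_I\cong\C$. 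Your route requires importing the classical completeness result rather than reproving it, but it buys strictly more: it gives norm convergence of the series $\sum p^n a_n$ to $f$ in $\mathcal{F}(\H)$, and since $|a_n|^2=|\alpha_n|^2+|\beta_n|^2$ it yields $\|f\|^2=\sum_n n!\,|a_n|^2$, which is essentially Proposition \ref{Fock1} of the paper for free. Both arguments are sound; yours is a legitimate and arguably more structural alternative, with the only caveat that the Splitting Lemma machinery should be cited rather than re-derived in passing.
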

\begin{proof}
Let us choose an imaginary unit $I\in\mathbb{S}$ and, for $n, m\in \mathbb{N}$, compute
$$
\langle p^n,p^m\rangle=\int_{\C_I} e^{-|p|^2} \overline{p^m}p^n d\sigma(x,y).
$$
By using polar coordinates, we write $p=\rho e^{I\theta}$ and we have
\[
\begin{split}
\langle p^n,p^m\rangle&=\frac{1}{\pi}\int_{0}^{2\pi}\int_0^{+\infty} e^{-\rho^2} {\rho^m} e^{-Im\theta}{\rho^n} e^{In\theta} \rho\, d\rho\,  d\theta\\
&=\frac{1}{\pi}\int_{0}^{2\pi}\int_0^{+\infty} e^{-\rho^2} \rho^{m+n+1} e^{I(n-m)\theta} d\rho\, d\theta\\
&=\frac{1}{2\pi}\int_{0}^{2\pi}e^{I(n-m)\theta}d\theta\int_0^{+\infty} e^{-\rho^2} \rho^{m+n}  d\rho^2. \\
\end{split}
\]
Since $\int_{0}^{2\pi}e^{I(n-m)\theta}d\theta$ vanishes for $n\not=m$ and equals $2\pi$ for $n=m$, we have
$\langle p^n,p^m\rangle =0$ for $n\not=m$. For $n=m$, standard computations give
\[
\langle p^n,p^n\rangle =\int_0^{+\infty} e^{-\rho^2} \rho^{2n}  d\rho^2=n!.
\]
Thus the monomials $p^n$ belong to $\mathcal{F}(\H)$ and any two of them are orthogonal. We now show that these monomials form a basis for
$\mathcal{F}(\H)$.
A function $f\in\mathcal{F}(\H)$ is entire so it admits series expansion of the form $f(p)=\sum_{m=0}^{+\infty} p^m a_m$ and thus the monomials
$p^n$ are generators. To show that they are independent, we show that if $\langle f, p^n\rangle=0$ for all $n\in\mathbb{N}$ then $f$ is identically zero.
We have:
\[
\begin{split}
\langle f, p^n \rangle &= \langle \sum_{m=0}^{+\infty} p^m a_m , p^n \rangle\\
&=\int_{\C_I} e^{-|p|^2} \overline{p^n}\left(\sum_{m=0}^{+\infty} p^m a_m\right)  d\sigma(x,y)\\
\end{split}
\]
and so
\[
\begin{split}
\langle f, p^n \rangle &=\lim_{r\to+\infty}\int_{|p|<r,\, p\in\C_I} e^{-|p|^2} \overline{p^n} \left(\sum_{m=0}^{+\infty} p^m a_m\right)  d\sigma(x,y)\\
&=\lim_{r\to+\infty}\sum_{m=0}^{+\infty}  \left(\int_{|p|<r,\, p\in\C_I} e^{-|p|^2} \overline{p^n} p^m  d\sigma(x,y) \right) a_m\\
&=\lim_{r\to+\infty}\int_{0}^r \rho^{2n} e^{-r^2} dr^2 a_n=n! a_n,
\end{split}
\]
where we used the fact that the series expansion converges uniformly on $|p|<r$, thus we can exchange the series with the integration where needed.
Thus $\langle f, p^n \rangle=0$ for all $n$ if and only if $a_n=0$, i.e. $f\equiv 0$.
\end{proof}
\begin{prop}\label{independence}
The definition of inner product (\ref{inner}) does not depend on the imaginary unit $I\in\mathbb{S}$.
\end{prop}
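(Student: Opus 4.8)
The plan is to reduce everything to the intrinsic Taylor coefficients of $f$ and $g$, which make no reference to any imaginary unit, together with the diagonal numbers $n!$ already computed in the previous proposition. Fix $f,g\in\mathcal{F}(\H)$. Being entire, they admit everywhere-convergent expansions $f(p)=\sum_{m=0}^\infty p^m a_m$ and $g(p)=\sum_{n=0}^\infty p^n b_n$, with $a_m=\frac{1}{m!}\frac{\partial^m f}{\partial x^m}(0)$ and likewise for $b_n$; crucially, these coefficients are determined by $f$ and $g$ alone and are independent of any choice of $I$. The goal is to establish that, for every $I\in\mathbb{S}$,
\begin{equation}
\langle f,g\rangle_I:=\int_{\C_I} e^{-|p|^2}\,\overline{g_I(p)}\,f_I(p)\,d\sigma(x,y)=\sum_{n=0}^\infty n!\,\overline{b_n}\,a_n,
\end{equation}
whose right-hand side visibly does not involve $I$.

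First I would integrate over the bounded disk $|p|<r$ in $\C_I$. There both power series converge absolutely and uniformly, and, since conjugation reverses the order of quaternionic products, the Cauchy product $\overline{g_I(p)}\,f_I(p)=\sum_{n,m}\overline{b_n}\,\overline{p^n}\,p^m\,a_m$ converges absolutely and uniformly on $\{|p|\le r\}$ as well, its majorant being the product $\bigl(\sum_n|b_n|r^n\bigr)\bigl(\sum_m|a_m|r^m\bigr)$ of two convergent series. Hence the double sum may be exchanged with the integral over the disk. Passing to polar coordinates $p=\rho e^{I\theta}$ exactly as in the previous proposition, the angular integral $\int_0^{2\pi}e^{I(m-n)\theta}\,d\theta=2\pi\delta_{nm}$ annihilates every off-diagonal term \emph{already at finite radius}, leaving
\begin{equation}
\int_{|p|<r}e^{-|p|^2}\,\overline{g_I(p)}\,f_I(p)\,d\sigma=\sum_{n=0}^\infty \overline{b_n}\,a_n\int_0^{r}e^{-\rho^2}\rho^{2n}\,d(\rho^2).
\end{equation}

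Finally I would let $r\to\infty$. Each inner integral increases monotonically to $\int_0^\infty e^{-t}t^n\,dt=n!$, while membership $f,g\in\mathcal{F}(\H)$ forces $\sum_n n!|a_n|^2<\infty$ and $\sum_n n!|b_n|^2<\infty$ (take $g=f$ in the displayed identity), so by Cauchy--Schwarz the series $\sum_n n!\,|\overline{b_n}a_n|$ converges; this summable, $r$-independent majorant lets the limit pass through the sum. The main point to handle with care is precisely this chain of interchanges on the \emph{unbounded} slice $\C_I$, but once the angular integration has removed the off-diagonal contributions at each finite radius, the surviving single series is dominated uniformly in $r$ and the passage to the limit is routine. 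Since the resulting expression $\sum_n n!\,\overline{b_n}a_n$ depends only on the $I$-independent coefficients, the inner product is independent of $I\in\mathbb{S}$; taking $g=f$ shows in addition that the very finiteness condition defining $\mathcal{F}(\H)$ is independent of $I$.
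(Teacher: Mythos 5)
Your proof is correct and follows essentially the same route as the paper's: expand $f$ and $g$ in power series, use the orthogonality of the monomials $p^n$ on the slice $\C_I$ to reduce the inner product to $\sum_{n} n!\,\overline{b_n}a_n$, which involves only the $I$-independent Taylor coefficients. The only difference is that you justify the interchange of summation and integration (finite radius first, then $r\to\infty$ via Cauchy--Schwarz and monotone convergence) more carefully than the paper, which performs the exchange without comment.
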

\begin{proof}
Let $f(p)=\sum_{m=0}^{+\infty} p^m a_m$, $g(p)=\sum_{m=0}^{+\infty} p^m b_m\in \mathcal{F}(\H)$ and let $I\in\mathbb{S}$.
We have
\[
\begin{split}
\langle f, g \rangle &= \langle \sum_{m=0}^{+\infty} p^m a_m ,  \sum_{m=0}^{+\infty} p^m b_m\rangle\\
&=\int_{\C_I} e^{-|p|^2} \left(\overline{\sum_{m=0}^{+\infty} p^m a_m} \right) \left(  \sum_{n=0}^{+\infty} p^n b_n \right)d\sigma(x,y)\\
&=\int_{\C_I} e^{-|p|^2} \left( \sum_{m=0}^{+\infty}  \bar{a}_m  \bar{p}^m \right) \left( \sum_{n=0}^{+\infty} p^n b_n\right) d\sigma(x,y)\\
\end{split}
\]
so that
\[
\begin{split}
\langle f, g \rangle &=\sum_{n=0}^{+\infty} \int_{\C_I} e^{-|p|^2}  \bar{a}_n  \bar{p}^n   p^n b_n d\sigma(x,y)\\
&=\sum_{n=0}^{+\infty} \bar{a}_n \left(\int_{\C_I} e^{-|p|^2}   \bar{p}^n   p^n  d\sigma(x,y)\right) b_n\\
&=\sum_{n=0}^{+\infty} n!  \bar{a}_n   b_n, \\
\end{split}
\]
which shows that the computation does not depend on the chosen imaginary unit $I$.
\end{proof}
Let us recall that the slice regular exponential function is defined by
$$
e^p:=\sum_{n=0}^{+\infty} \frac{p^n}{n!}.
$$
We need to generalize the definition of the function
$e^{zw}=\sum_{m=0}^{+\infty} \frac{(zw)^m}{m!}$, $z,w\in\C$ to the slice hyperholomorphic setting.

We first observe that if we set $e^{pq}=\sum_{m=0}^{+\infty} \frac{(pq)^m}{m!}$
then the function $e^{pq}$ does not have any good property of regularity:
it is not slice regular neither in $p$ nor in $q$ (while $e^{zw}$ is holomorphic in both the variables).
Let us consider $p$ as a variable and $q$ as a parameter and set:
\begin{equation}\label{exp}
e_\star^{pq}=\sum_{n=0}^{+\infty} \frac{(pq)^{\star n}}{n!}=\sum_{n=0}^{+\infty} \frac{p^n q^n}{n!}
\end{equation}
where the $\star$-product (see \cite{bookfunctional}) is computed with respect to the variable $p$. It is immediate that $e_\star^{pq}$ is a function left slice regular in $p$ and right regular in $q$.
\begin{rem}{\rm
The definition (\ref{exp}) is consistent with the fact that we are looking for a slice regular extension of $e^{zw}$.
In fact, we start from the function $e^{zw}=\sum_{n=0}^{+\infty} \frac{z^nw^n}{n!}$, which is holomorphic in $z$ seen as an element on the complex plane $\C_I$;
we then use the Representation Formula to get the extension to $\mathbb H$:
$$
{\rm ext}(e^{zw})=\frac 12 ( 1- I_qI ) \sum_{n=0}^{+\infty} \frac{z^nw^n}{n!}+ \frac 12 ( 1+ I_qI )  \sum_{n=0}^{+\infty}\frac{{\bar z}^nw^n}{n!}=e^{qw}
$$
and since $w$ is arbitrary, we get the statement.}
\end{rem}
We now set $k_q(p):=e_\star^{p\bar q}$ and we discuss the reproducing property in the Fock space.
\begin{thm}
For every $f\in\mathcal{F}(\H)$ we have
$$
\langle f, k_q \rangle= f(q).
$$
Moreover, $\langle k_q,k_s\rangle= e_\star^{s\bar{q}}$.
\end{thm}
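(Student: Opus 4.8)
The plan is to establish the two claims in sequence, after first confirming that $k_q$ really is an element of $\mathcal{F}(\H)$, so that $\langle f,k_q\rangle$ is meaningful for every $f$. For this I would read off the Taylor coefficients of $k_q(p)=e_\star^{p\bar q}=\sum_{n\ge0}p^n\frac{\bar q^n}{n!}$, namely $c_n=\frac{\bar q^n}{n!}$ with $|c_n|^2=\frac{|q|^{2n}}{(n!)^2}$, and apply the norm characterization coming from the orthogonality of the monomials (where $\langle p^n,p^n\rangle=n!$). This gives $\|k_q\|^2=\sum_n n!\,|c_n|^2=\sum_n\frac{|q|^{2n}}{n!}=e^{|q|^2}<\infty$, so $k_q\in\mathcal{F}(\H)$.

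To prove the reproducing identity I would expand $f(p)=\sum_m p^m a_m$ and use two linearity rules for the inner product, both obtained by pulling the constant quaternionic coefficients out of the integral defining $\langle\cdot,\cdot\rangle$: right-linearity in the first entry, $\langle fa,g\rangle=\langle f,g\rangle a$, and conjugate-left-linearity in the second, $\langle f,gb\rangle=\bar b\,\langle f,g\rangle$. Combining these with the orthogonality relation $\langle p^m,p^n\rangle=\delta_{mn}\,n!$, each term of the double expansion collapses to
\[
\langle p^m a_m,\,p^n\frac{\bar q^n}{n!}\rangle=\overline{\Big(\frac{\bar q^n}{n!}\Big)}\,\langle p^m,p^n\rangle\,a_m=\frac{q^n}{n!}\,\delta_{mn}\,n!\,a_m=\delta_{mn}\,q^n a_m,
\]
and summing yields $\langle f,k_q\rangle=\sum_n q^n a_n=f(q)$.

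The second formula then costs essentially nothing: since $k_q\in\mathcal{F}(\H)$, I would apply the reproducing identity (at kernel point $s$) to the function $f=k_q$, obtaining $\langle k_q,k_s\rangle=k_q(s)$, which by the very definition of $k_q$ equals $e_\star^{s\bar q}$, as claimed.

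The delicate point is the interchange of the double summation with the integral in the second step. In the non-commutative setting one must scrupulously keep each coefficient $a_m$ on the right and extract each $\frac{\bar q^n}{n!}$ from the second argument conjugated and on the left. I would justify the exchange exactly as in the basis proposition—integrating first over $\{|p|<r\}$, where both power series converge uniformly, and then letting $r\to\infty$—or, more conceptually, invoke the continuity of the inner product together with the Fock-norm convergence of the monomial expansions of $f$ and $k_q$, which the first step and $f\in\mathcal{F}(\H)$ guarantee.
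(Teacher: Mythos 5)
Your proof is correct, and the core of it --- expanding $f$ and $k_q$ in the monomial basis, pulling the quaternionic coefficients out on the correct sides, and collapsing the double sum via $\langle p^m,p^n\rangle=\delta_{mn}\,n!$ --- is exactly the computation the paper performs. You differ from the paper in two small but worthwhile ways. First, you verify up front that $k_q\in\mathcal{F}(\H)$ by computing $\|k_q\|^2=\sum_n n!\,\frac{|q|^{2n}}{(n!)^2}=e^{|q|^2}<\infty$; the paper never checks this, even though it is needed for $\langle f,k_q\rangle$ to make sense, so this is a genuine (minor) improvement. Second, for the identity $\langle k_q,k_s\rangle=e_\star^{s\bar q}$ the paper redoes the integral computation from scratch with both exponentials expanded, whereas you simply apply the already-proved reproducing property to $f=k_q$ at the point $s$; your route is shorter and is precisely why the membership check in your first step pays off. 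Your handling of the noncommutativity (coefficients of the second argument come out conjugated on the left, those of the first argument on the right, with the real scalar $\delta_{mn}n!$ sitting harmlessly in between) and your justification of the sum--integral interchange by truncating to $|p|<r$ match the paper's conventions and its earlier basis proposition.
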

\begin{proof}
We have
\[
\begin{split}
\langle f, k_q \rangle &= \int_{\C_I} e^{-|p|^2} \overline{ e^{p\bar q}} f(p) d\sigma(x,y)\\
&= \int_{\C_I} e^{-|p|^2} \left(\sum_{n=0}^{+\infty} \frac{q^n\bar p^n}{n!} \right)\left(\sum_{m=0}^{+\infty} p^ma_m \right)d\sigma(x,y)\\
&= \sum_{n=0}^{+\infty}\sum_{m=0}^{+\infty} \frac{q^n}{n!}\langle p^m, p^n\rangle a_m
\\
&
=\sum_{n=0}^{+\infty} q^n a_n
\\
&
=f(q).
\end{split}
\]
Similarly, we have
\[
\begin{split}
\langle k_q,k_s\rangle &= \int_{\C_I} e^{-|p|^2} \overline{ e^{p\bar s}}  e^{p\bar q} d\sigma(x,y)\\
&= \int_{\C_I} e^{-|p|^2} \left(\sum_{n=0}^{+\infty} \frac{s^n\bar p^n}{n!} \right) \left(\sum_{m=0}^{+\infty} p^m\bar{q}^m\right) d\sigma(x,y)\\
&= \sum_{n=0}^{+\infty}\sum_{m=0}^{+\infty} \frac{s^n}{n!}\langle p^m, p^n\rangle \frac{\bar{q}^m}{m!}
\\
&
=\sum_{n=0}^{+\infty} \frac{s^n \bar{q}^n}{n!}
\\
&
=e_\star^{s\bar q}.
\end{split}
\]
\end{proof}
\begin{prop}\label{Fock1}
A function $f(p)=\sum_{m=0}^{+\infty} p^m a_m$ belongs to $\mathcal{F}(\H)$ if and only if $\sum_{m=0}^{+\infty}  |a_m|^2 m!<\infty$.
\end{prop}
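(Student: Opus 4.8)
The plan is to read off both implications from the orthogonality relations already established, namely that the monomials $p^n$ are pairwise orthogonal in $\mathcal{F}(\H)$ with $\langle p^n,p^n\rangle=n!$; equivalently $\{p^n/\sqrt{n!}\}_{n\ge 0}$ is an orthonormal system. The identity that does all the work is $\|f\|^2=\sum_{m=0}^\infty m!\,|a_m|^2$ for $f(p)=\sum_{m} p^m a_m$, which is exactly the computation carried out in the proof of Proposition \ref{independence} specialized to $g=f$.

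For the forward implication, suppose $f\in\mathcal{F}(\H)$. Being slice regular on all of $\H$, it is entire and hence expands as $f(p)=\sum_{m=0}^\infty p^m a_m$. Applying the computation of Proposition \ref{independence} with $g=f$ gives $\|f\|^2=\sum_{m=0}^\infty m!\,|a_m|^2$, and since $\|f\|^2<\infty$ by the very definition of $\mathcal{F}(\H)$, the series $\sum_{m} m!\,|a_m|^2$ converges.

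For the converse, assume $\sum_{m=0}^\infty m!\,|a_m|^2<\infty$. First I would check that $f(p)=\sum_{m} p^m a_m$ really defines an entire slice regular function: from $m!\,|a_m|^2\to 0$ one gets $|a_m|\le C\,(m!)^{-1/2}$, whence $\limsup_m |a_m|^{1/m}=0$ and the radius of convergence is infinite. It then remains to bound the norm. I would truncate, setting $f_N(p)=\sum_{m=0}^N p^m a_m$; each $f_N$ is a polynomial, so $f_N\in\mathcal{F}(\H)$, and by orthogonality $\|f_N\|^2=\sum_{m=0}^N m!\,|a_m|^2\le \sum_{m=0}^\infty m!\,|a_m|^2$. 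Since $f_N\to f$ uniformly on every disc $|p|<r$ in $\C_I$, we have $|f_N(p)|^2\to|f(p)|^2$ pointwise, and Fatou's lemma yields
\[
\int_{\C_I} e^{-|p|^2}|f_I(p)|^2\, d\sigma(x,y)\le \liminf_{N\to\infty}\|f_N\|^2=\sum_{m=0}^\infty m!\,|a_m|^2<\infty,
\]
so that $f\in\mathcal{F}(\H)$.

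The routine part is the orthogonality bookkeeping, which is inherited from the earlier propositions. The only genuinely delicate point is the limiting argument in the converse: one must pass from the norms of the partial sums to the norm of $f$ without assuming a priori that $f$ lies in the space. I expect this interchange---handled cleanly by Fatou's lemma (or equivalently by monotone convergence after expanding $|f_N|^2$)---to be the main obstacle, the rest being a direct application of the identity $\|f\|^2=\sum_{m} m!\,|a_m|^2$.
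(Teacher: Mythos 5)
Your proof is correct and rests on the same foundation as the paper's: the orthogonality relations $\langle p^n,p^m\rangle=n!\,\delta_{nm}$ and the resulting identity $\|f\|^2=\sum_{m} m!\,|a_m|^2$. The paper obtains this identity in a single computation valid for any entire $f$ (polar coordinates, Parseval on each circle, and term-by-term integration of a series of nonnegative terms), which yields both implications at once; your truncation-plus-Fatou argument for the converse is just a slightly more explicit way of justifying the same interchange of limit and integral.
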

\begin{proof}
Let us use polar coordinates; with computations similar to those in the proof of Proposition \ref{independence} and using the Parseval identity, we have
\[
\begin{split}
\int_{\C_I} e^{-|p|^2} |f(p)|^2 d\sigma(x,y)&=\lim_{r\to +\infty} \frac{1}{\pi}\int_0^r  e^{-\rho^2}\int_0^{2\pi} |f(\rho e^{I\theta})|^2 \rho\, d\theta\, d\rho \\
&=2\lim_{r\to +\infty}  \int_0^r  e^{-\rho^2}(\sum_{m=0}^{+\infty} \rho^{2m}|a_m|^2) \rho\,  d\rho \\
&=2\lim_{r\to +\infty}  \sum_{m=0}^{+\infty}\int_0^r  e^{-\rho^2} \rho^{2m+1}|a_m|^2  d\rho
\\
&= \sum_{m=0}^{+\infty} |a_m|^2m!\\
\end{split}
\]
and the statement follows.
\end{proof}

\section{Quaternion full Fock space and symmetric Fock space}
\setcounter{equation}{0}
\label{sec4}

Let $V$ be a right vector space over $\mathbb H$. Recall that a quaternionic inner product on $V$ is a map $\langle \cdot,\cdot \rangle : V \times V\to \mathbb H$ satisfies the same properties of a complex inner product, with the exception of
the homogeneity requirement which is replaced by
\[
\langle  u \alpha ,  v \beta \rangle = \overline{\beta} \langle u ,  v\rangle \alpha,
\]
and that if $V$ is complete with respect to the norm induced by the inner product, it is called a right quaternionic Hilbert space. A similar definition can be given in the case of a quaternionic vector space on the left or two-sided.\\

Let $\bigH$ be a two-sided quaternionic Hilbert space. Then one may consider the
quaternionic $n$-fold Hilbert space tensor power $\bigH^{\otimes n}$ defined by
\[
\bigH^{\otimes n}=\bigH\otimes \bigH \otimes \cdots \otimes \bigH \quad \text{($n$ times)},
\]
where all tensor product are over $\mathbb H$.

\begin{rem}{\rm
A convenient way of constructing $\bigH^{\otimes n}$ is
inductively. Recall that if $M$ is a left $R$-module and $N$ is a
right $R$-module, then a tensor product of them ${M_R} \otimes
{_RN}$ is an abelian group together with a bilinear map $\delta:
M \times N \to {M_R} \otimes {_RN}$ which is universal in the
sense that for any abelian group $A$ and a bilinear map $f: M
\times N \to A$, there is a unique group homomorphism $\tilde f :
{M_R} \otimes {_RN} \to A$ such that $\tilde f \otimes \delta
=f$. If furthermore, $M$ is a right $S$-module and $N$ is a left
$T$-module, then ${_SM_R} \otimes {_RN_T}$ is a $(S,T)$-bi-module if
one defines $sz=(s \otimes 1)z$ and $zt=z(1 \otimes t)$ for $z
\in {_SM_R} \otimes {_RN_T}$. Since it holds that
\[
({_RM_S} \otimes {_SN_T}) \otimes {_TP_U} \cong {_RM_S} \otimes ({_SN_T} \otimes {_TP_U}),
\]
one can define inductively the tensor product of $M_1, \dots ,M_n$, where $M_i$ is a $(R_{i-1},R_i)$-bi-module, and obtain a $(R_0,R_n)$-bi-module,
\[
{_{R_0}{M_1}_{R_1}} \otimes {_{R_1}{M_2}_{R_2}} \otimes \cdots \otimes {_{R_{n-1}}{M_1}_{R_n}}.
\]
For more details see \cite[pp. 133-135]{Jacobson}.
One can also define it non-inductively (see \cite[pp. 264]{Bourbaki}).}
\end{rem}

We make the convention $\bigH^{\otimes 0}=\mathbb \mathbb H$, and
the element $1 \in \mathbb \mathbb H$ is called the vacuum vector
and denoted by $\bf 1$. For the case of two Hilbert spaces in the
next proposition, see also \cite[equation (3)]{ak}.
\begin{prop}\label{prop 4.2}
Let $\langle \cdot, \cdot \rangle$ be the inner product of $\bigH$, and assume that it satisfies also the additional property
\[
\langle u , \lambda v\rangle =\langle \overline{\lambda} u ,  v\rangle.
\]
Then, it induces an inner product on
 $\bigH^{\otimes n}$,
\[
\langle u_1 \otimes \cdots \otimes u_n , v_1 \otimes \cdots
\otimes v_n \rangle =\langle \langle\cdots\langle \langle \langle u_1,v_1\rangle u_2, v_2 \rangle u_3, v_3 \rangle  \cdots \rangle u_n, v_n \rangle,
\]
with the same additional property.
\end{prop}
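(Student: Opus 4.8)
The plan is to reduce the statement to the case $n=2$ and then induct on $n$. Writing $\bigH^{\otimes n}=\bigH^{\otimes(n-1)}\otimes\bigH$, I would assume inductively that the nested formula already equips $\bigH^{\otimes(n-1)}$ with a quaternionic inner product satisfying the additional property $\langle U,\lambda V\rangle=\langle\overline{\lambda}U,V\rangle$, the base case $n=1$ being the hypothesis on $\bigH$ itself. This inductive hypothesis is exactly what lets me treat $\bigH^{\otimes(n-1)}$ as a new first factor and invoke the following two-factor lemma: if $\bigH_1,\bigH_2$ are two-sided quaternionic inner product spaces each satisfying the additional property, then $\langle u_1\otimes u_2,v_1\otimes v_2\rangle:=\langle\langle u_1,v_1\rangle u_2,v_2\rangle$ is a well-defined quaternionic inner product on $\bigH_1\otimes_{\mathbb H}\bigH_2$ with the same additional property. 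For this lemma I must check four things: that the form descends to the tensor product, that it is sesquilinear with the correct homogeneity, that it is Hermitian, and that it is positive definite.

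For well-definedness I would invoke the universal property, verifying that $(u_1,u_2)\mapsto\langle\langle u_1,v_1\rangle u_2,v_2\rangle$ is balanced in each pair of slots. In the first factor, $\langle u_1\alpha,v_1\rangle=\langle u_1,v_1\rangle\alpha$ (homogeneity with $\beta=1$) gives $\langle\langle u_1\alpha,v_1\rangle u_2,v_2\rangle=\langle\langle u_1,v_1\rangle(\alpha u_2),v_2\rangle$, so the form respects $u_1\alpha\otimes u_2=u_1\otimes\alpha u_2$. In the second factor the relation $v_1\beta\otimes v_2=v_1\otimes\beta v_2$ forces a comparison of $\langle(\overline{\beta}\langle u_1,v_1\rangle)u_2,v_2\rangle$ with $\langle\langle u_1,v_1\rangle u_2,\beta v_2\rangle$, and these agree precisely by the additional property $\langle w,\beta v_2\rangle=\langle\overline{\beta}w,v_2\rangle$; this is the step where the extra hypothesis earns its keep. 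The same property yields Hermitian symmetry: writing $a=\langle u_1,v_1\rangle$ and using $\langle\overline{a}v_2,u_2\rangle=\langle v_2,au_2\rangle$ turns $\langle v_1\otimes v_2,u_1\otimes u_2\rangle$ into $\overline{\langle u_1\otimes u_2,v_1\otimes v_2\rangle}$, and an identical one-line computation shows the induced form again satisfies the additional property. Right homogeneity follows from $\langle w\gamma,v_2\rangle=\langle w,v_2\rangle\gamma$, additivity being immediate.

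The main obstacle is positive definiteness, since the naive expansion of $\langle x,x\rangle$ for a sum $x=\sum_i u_1^{(i)}\otimes u_2^{(i)}$ gives a quaternionic quadratic form that is not visibly nonnegative. I would dissolve this by orthonormalizing the first factor: apply quaternionic Gram--Schmidt to $u_1^{(1)},\dots,u_1^{(m)}$ to get an orthonormal family $f_1,\dots,f_r$ with $\langle f_a,f_b\rangle=\delta_{ab}$, expand each $u_1^{(i)}$ over it, and push the scalar coefficients across the tensor sign via the balancing relation, rewriting $x$ in the reduced form $x=\sum_{a=1}^r f_a\otimes w_a$ with $w_a\in\bigH_2$. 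Orthonormality then collapses the double sum:
\[
\langle x,x\rangle=\sum_{a,b}\langle\langle f_a,f_b\rangle w_a,w_b\rangle=\sum_{a=1}^r\langle w_a,w_a\rangle=\sum_{a=1}^r\|w_a\|^2\ge 0,
\]
which vanishes only when every $w_a=0$, in which case $x=\sum_a f_a\otimes w_a=0$; this gives definiteness. This proves the two-factor lemma, and feeding it into the induction on $n$ yields the proposition, with completeness of $\bigH^{\otimes n}$ obtained by passing to the completion in the induced norm.
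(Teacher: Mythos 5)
Your proof is correct and follows essentially the same route as the paper: induction on $n$, verifying homogeneity, Hermitian symmetry, and the additional property on elementary tensors just as in the paper's proof of Proposition \ref{prop 4.2}. The only substantive difference is that the paper disposes of the remaining axioms with ``Additivity and positivity are obvious'' and never addresses well-definedness, whereas you supply the balancedness check (correctly isolating the second-slot relation $v_1\beta\otimes v_2=v_1\otimes\beta v_2$ as the place where the extra hypothesis is used) and the Gram--Schmidt reduction to $x=\sum_a f_a\otimes w_a$ that actually establishes positive definiteness --- both additions are sound and fill real gaps rather than changing the approach.
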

\begin{proof}[Proof]
The statement clearly holds for $n=1$.
By induction,
\[
\begin{split}
\langle u_1 \otimes \cdots \otimes u_n  \alpha, v_1 \otimes \cdots \otimes v_n \beta \rangle
&=\langle \langle u_1 \otimes \cdots \otimes u_{n-1} , v_1 \otimes \cdots \otimes v_{n-1} \rangle u_n \alpha , v_n \beta \rangle\\
&=\overline{\beta} \langle \langle u_1 \otimes \cdots \otimes u_{n-1} , v_1 \otimes \cdots \otimes v_{n-1} \rangle u_n  , v_n \rangle \alpha\\
&=\overline{\beta}\langle u_1 \otimes \cdots \otimes u_n  , v_1 \otimes \cdots \otimes v_n  \rangle \alpha,
\end{split}
\]
and
\[
\begin{split}
\overline{\langle  v_1 \otimes \cdots \otimes v_n ,u_1 \otimes \cdots \otimes u_n   \rangle}
&=\overline{\langle \langle v_1 \otimes \cdots \otimes v_{n-1} , u_1 \otimes \cdots \otimes u_{n-1} \rangle v_n  ,u_n  \rangle}\\
&=\langle u_n , \langle v_1 \otimes \cdots \otimes v_{n-1} , u_1 \otimes \cdots \otimes u_{n-1} \rangle v_n   \rangle\\
&=\langle \overline{\langle v_1 \otimes \cdots \otimes v_{n-1} , u_1 \otimes \cdots \otimes u_{n-1} \rangle} u_n ,  v_n   \rangle\\
&=\langle \langle  u_1 \otimes \cdots \otimes u_{n-1},v_1 \otimes \cdots \otimes v_{n-1}  \rangle u_n ,  v_n   \rangle\\
&=\langle u_1 \otimes \cdots \otimes u_n , v_1 \otimes \cdots \otimes v_n \rangle.
\end{split}
\]
For the additional property, we obtain
\[
\begin{split}
\langle u_1 \otimes \cdots \otimes u_n  , \lambda v_1 \otimes \cdots \otimes v_n  \rangle
&=\langle \langle u_1 \otimes \cdots \otimes u_{n-1} ,\lambda  v_1 \otimes \cdots \otimes v_{n-1} \rangle u_n  , v_n  \rangle\\
&= \langle \langle  \overline{\lambda} u_1 \otimes \cdots \otimes u_{n-1} , v_1 \otimes \cdots \otimes v_{n-1} \rangle u_n  , v_n \rangle \\
&=\langle \overline{\lambda} u_1 \otimes \cdots \otimes u_n  ,  v_1 \otimes \cdots \otimes v_n  \rangle.
\end{split}
\]
Additivity and positivity are obvious.
\end{proof}
\begin{defn}
The quaternionic full Fock module over an Hilbert space $\bigH$ is the space
\[
\bigF(\bigH)=\oplus_{n=0}^\infty\bigH^{\otimes n},
\]
with the corresponding inner product.\\
\end{defn}

\begin{defn}
Let $u \in \bigH$. The right-linear map $T_u: \bigF(\bigH) \to \bigF(\bigH)$ defined by
\[
T_u(u_1 \otimes \dots \otimes u_n)=u \otimes u_1 \otimes \dots \otimes u_n,
\]
is called the creation map.
The right-linear map $T_u^*: \bigF(\bigH) \to \bigF(\bigH)$ defined by
\[
T_u^*(u_0 \otimes \dots \otimes u_n)=\overline{\langle u ,u_0 \rangle}  u_1 \otimes \dots \otimes u_n,
\]
is called the annihilator map.
\end{defn}
The following result is the quaternionic counterpart of a classical result:
\begin{prop}
$T_u^*$ is the adjoint of $T_u$.
\end{prop}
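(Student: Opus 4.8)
The plan is to reduce the identity $\langle T_u v, w\rangle = \langle v, T_u^* w\rangle$ to homogeneous simple tensors and then expand both sides with the recursive inner-product formula of Proposition \ref{prop 4.2}, so that the whole matter collapses to a single innermost bracket handled by the additional property. First I would observe that $T_u$ and $T_u^*$ are right-linear and that the graded summands $\bigH^{\otimes n}$ are mutually orthogonal in $\bigF(\bigH)$; hence it suffices to verify the relation on simple tensors. Since $T_u$ raises the tensor degree by one and $T_u^*$ lowers it by one, both $\langle T_u v, w\rangle$ and $\langle v, T_u^* w\rangle$ vanish unless $\deg w = \deg v + 1$. So I may restrict to $v = v_1 \otimes \cdots \otimes v_n$ and $w = w_0 \otimes w_1 \otimes \cdots \otimes w_n$, treating the vacuum case $v=\mathbf 1$, $w=w_0$ separately.

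For the generic case I would expand the left-hand side $\langle u \otimes v_1 \otimes \cdots \otimes v_n,\, w_0 \otimes w_1 \otimes \cdots \otimes w_n\rangle$ by Proposition \ref{prop 4.2}, obtaining the nested expression $\langle\langle\cdots\langle\langle u, w_0\rangle v_1, w_1\rangle\cdots\rangle v_n, w_n\rangle$. For the right-hand side I would write $T_u^* w = \bigl(\overline{\langle u, w_0\rangle}\, w_1\bigr)\otimes w_2 \otimes \cdots \otimes w_n$ and expand $\langle v_1 \otimes \cdots \otimes v_n,\, T_u^* w\rangle$ by the same formula, getting $\langle\langle\cdots\langle\langle v_1, \overline{\langle u, w_0\rangle}\, w_1\rangle v_2, w_2\rangle\cdots\rangle v_n, w_n\rangle$. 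These two nested expressions have an identical outer structure: the pairings of $v_i$ against $w_i$ for $i \ge 2$ are literally the same. Everything therefore comes down to comparing the two innermost brackets, and here the key step is the additional property $\langle u, \lambda v\rangle = \langle \overline{\lambda}\, u, v\rangle$ (available on $\bigH$ by hypothesis and preserved on all tensor powers by Proposition \ref{prop 4.2}): with $\lambda = \overline{\langle u, w_0\rangle}$ it gives $\langle v_1, \overline{\langle u, w_0\rangle}\, w_1\rangle = \langle \langle u, w_0\rangle v_1, w_1\rangle$, which is exactly the innermost bracket on the left. Thus the two sides coincide.

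The base case $n=0$ I would dispatch directly, using the inner product $\langle a, b\rangle = \overline{b}\,a$ on $\bigH^{\otimes 0}=\mathbb{H}$ together with the identifications $T_u \mathbf 1 = u$ and $T_u^* w_0 = \overline{\langle u, w_0\rangle}\,\mathbf 1$: one checks $\langle T_u \mathbf 1, w_0\rangle = \langle u, w_0\rangle = \langle \mathbf 1, \overline{\langle u, w_0\rangle}\,\mathbf 1\rangle = \langle \mathbf 1, T_u^* w_0\rangle$. The main obstacle, and really the crux of the entire computation, is the bookkeeping of left versus right scalar multiplication: the annihilator inserts a \emph{left} scalar factor $\overline{\langle u, w_0\rangle}$ on the surviving first slot, whereas the recursive inner product is built up from the first slot with scalars entering on the left, and it is precisely the additional property that transfers this factor from the second argument to the first. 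Finally, I would note that $T_u$ is bounded (indeed $\|T_u\| = \|u\|$), so verifying the identity on the dense subspace of finite sums of simple tensors extends it to all of $\bigF(\bigH)$ by continuity.
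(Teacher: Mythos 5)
Your proof is correct and is essentially the paper's own argument, just run from the other side of the adjoint identity: the paper expands $\langle T_u^*(u_0\otimes\cdots\otimes u_n), v_1\otimes\cdots\otimes v_n\rangle$ and uses Hermitian symmetry to turn $\overline{\langle u,u_0\rangle}$ into $\langle u_0,u\rangle$, which by the recursive formula of Proposition \ref{prop 4.2} is exactly the innermost bracket of $\langle u_0\otimes\cdots\otimes u_n, u\otimes v_1\otimes\cdots\otimes v_n\rangle$, while you expand $\langle T_u v, w\rangle$ versus $\langle v, T_u^* w\rangle$ and shuttle the same scalar with the additional property $\langle u,\lambda v\rangle=\langle\overline{\lambda}u,v\rangle$. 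Your extra care with the vacuum case, the orthogonality of the graded summands, and the density/boundedness remark are sound additions that the paper leaves implicit.
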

\begin{proof}[Proof]
The statement follows from
\[
\begin{split}
\langle T_u^*(u_0  \otimes \dots \otimes u_n), v_1 \otimes \dots \otimes v_n\rangle
&=\langle \overline{\langle u,u_0 \rangle}  u_1 \otimes \dots \otimes u_n, v_1 \otimes \dots \otimes v_n\rangle\\
&=\langle \langle u_0,u \rangle  u_1 \otimes \dots \otimes u_n, v_1 \otimes \dots \otimes v_n\rangle\\
&=\langle u_0  \otimes \dots \otimes u_n, u \otimes v_1 \otimes \dots \otimes v_n\rangle\\
&=\langle u_0  \otimes \dots \otimes u_n, T_u (v_1 \otimes \dots \otimes v_n)\rangle
\end{split}
\]
\end{proof}

\begin{rem}{
Note that the isometry $u \mapsto T_u$ is both left-linear and right-linear.}
\end{rem}

The complex-valued version of the following proposition appears
in \cite{MR2540072,MR2770019}, where the free Brownian motion is
defined and studied. The derivative of the function $X(t)$ is
studied in \cite{2013arXiv1311.3239A}.

\begin{prop}[The non-symmetric quaternionic Brownian motion]
Let $\bigH=\mathbf L_2(\mathbb R^+, dx)$, and consider
$X(t)=T_{{\bf 1}_{[0,t]}}+T_{{\bf 1}_{[0,t]}}^*$. Then
\[
\langle X(t) {\bf1}, X(s) {\bf1} \rangle = \min \{t,s \}.
\]
In particular $X(t)$ is self-adjoint, and if one consider the
expectation $E: B(\bigF(\bigH)) \to \mathbb H$  defined by
$E(T)=\langle T {\bf 1}, {\bf 1} \rangle$, then
\[
E(X(s)^*X(t))=\min\{t,s\}.
\]
\end{prop}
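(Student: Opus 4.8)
The plan is to reduce the whole statement to the action of $X(t)$ on the vacuum vector $\mathbf 1$, where the creation and annihilation maps are easiest to evaluate. First I would record two elementary facts straight from the definitions: the creation map gives $T_u\mathbf 1 = u \in \bigH = \bigH^{\otimes 1}$, while the annihilation map gives $T_u^*\mathbf 1 = 0$, since $\mathbf 1$ lies in $\bigH^{\otimes 0}=\mathbb H$ and there is no tensor factor to contract. Taking $u = \mathbf 1_{[0,t]}$ this yields
\[
X(t)\mathbf 1 = T_{\mathbf 1_{[0,t]}}\mathbf 1 + T_{\mathbf 1_{[0,t]}}^*\mathbf 1 = \mathbf 1_{[0,t]} \in \mathbf L_2(\mathbb R^+,dx).
\]

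Granting this, the first identity is just the inner product of two indicator functions in $\bigH$,
\[
\langle X(t)\mathbf 1, X(s)\mathbf 1\rangle = \langle \mathbf 1_{[0,t]}, \mathbf 1_{[0,s]}\rangle = \int_0^\infty \mathbf 1_{[0,t]}(x)\,\mathbf 1_{[0,s]}(x)\,dx = \min\{t,s\},
\]
because the integrand is the indicator of $[0,\min\{t,s\}]$. For self-adjointness I would use the preceding proposition, which identifies $T_u^*$ as the adjoint of $T_u$, and note that on the full Fock module $T_u$ is bounded with norm $\|u\|$, so $X(t)$ is a bona fide bounded operator with no domain issue; taking adjoints termwise and using $(T_u^*)^* = T_u$ gives $X(t)^* = T_{\mathbf 1_{[0,t]}}^* + T_{\mathbf 1_{[0,t]}} = X(t)$. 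The expectation statement then follows by unwinding $E(T)=\langle T\mathbf 1,\mathbf 1\rangle$ and moving $X(s)^*$ across the inner product by the adjoint relation,
\[
E(X(s)^*X(t)) = \langle X(s)^*X(t)\mathbf 1, \mathbf 1\rangle = \langle X(t)\mathbf 1, X(s)\mathbf 1\rangle = \min\{t,s\}.
\]

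The computation is short, and no step is genuinely hard; the care that is needed is entirely about respecting the noncommutative quaternionic conventions. I must check that the adjoint operation is additive and satisfies $(T_u^*)^*=T_u$ with the specific left/right homogeneities and conjugations appearing in the inner product, and that $X(s)^*$ may legitimately be transferred to the other entry of $\langle\cdot,\cdot\rangle$. The one point that turns the self-adjointness from a formal identity into a rigorous one is the boundedness of $T_u$, which I would deduce from the orthogonality of distinct tensor levels together with the multiplicativity $\|u\otimes\xi\| = \|u\|\,\|\xi\|$ of the induced norm; this multiplicativity is exactly where the additional property of the inner product assumed in Proposition~\ref{prop 4.2} enters.
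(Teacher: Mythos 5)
Your proposal is correct and follows essentially the same route as the paper: the key observation in both is that $T_u^*\mathbf 1=0$, so $X(t)\mathbf 1=\mathbf 1_{[0,t]}$ and the covariance reduces to $\langle \mathbf 1_{[0,t]},\mathbf 1_{[0,s]}\rangle_{\bigH}=\min\{t,s\}$. Your additional remarks on boundedness of $T_u$ and the adjoint manipulation for the expectation identity are sound and in fact spell out details the paper leaves implicit.
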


\begin{proof}[Proof]
More generally, note that
\[
\langle (T_u+T_u^*) {\bf1}, (T_u+T_u^*) {\bf1} \rangle
=\langle T_u {\bf1}, T_u {\bf 1} \rangle
=\langle u, u \rangle.
\]
Since $\langle {\bf 1}_{[0,t]},{\bf 1}_{[0,s]}
\rangle_{\bigH}=\min \{t,s\}$, the result follows.
\end{proof}

The symmetric product $\circ$ is defined by
\[
u_1 \circ \cdots \circ u_n=\frac{1}{n!} \sum_{\sigma \in S_n}
u_{\sigma(1)}\otimes \cdots \otimes u_{\sigma(n)},
\]
and the closed subspace of $\bigH^{\otimes n}$ generated by all
vectors of this form is called the $n$-th symmetric power of
$\bigH$, and denoted by $\bigH^{\circ n}$.

\begin{prop}\label{tensorInner}
Let $\langle \cdot, \cdot \rangle$ be the inner product of $\bigH$, and assume that it satisfies also the additional property
\[
\langle u , \lambda v\rangle =\langle \overline{\lambda} u ,  v\rangle.
\]
Then, it induces an inner product on
 $\bigH^{\circ n}$,
\[
\begin{split}
\langle u_1 \circ \cdots & \circ u_n , v_1  \circ \cdots
\circ v_n \rangle \\
&=\frac{1}{n!^2}\sum_{\sigma,\tau \in S_n}\langle \langle\cdots\langle \langle \langle u_{\sigma(1)},v_{\tau(1)}\rangle u_{\sigma(2)}, v_{\tau(2)} \rangle u_{\sigma(3)}, v_{\tau(3)} \rangle  \cdots \rangle u_{\sigma(n)}, v_{\tau(n)} \rangle,
\end{split}
\]
with the same additional property.
\end{prop}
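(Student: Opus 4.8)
The plan is to obtain the inner product on $\bigH^{\circ n}$ not by a fresh construction, but simply as the restriction of the inner product already built on $\bigH^{\otimes n}$ in Proposition \ref{prop 4.2}, and then to verify that this restriction, evaluated on symmetric product vectors, reproduces the stated formula. Since $\bigH^{\circ n}$ is by its very definition a subspace of $\bigH^{\otimes n}$, the restriction of an inner product is automatically an inner product, and every algebraic property is inherited term by term: the quaternionic homogeneity $\langle u\alpha, v\beta\rangle = \overline{\beta}\langle u, v\rangle\alpha$, additivity, conjugate symmetry, positivity (hence definiteness, as $\langle w,w\rangle=0$ for $w\in\bigH^{\circ n}\subseteq\bigH^{\otimes n}$ already forces $w=0$), and the additional property $\langle u, \lambda v\rangle = \langle \overline{\lambda}u, v\rangle$. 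In particular nothing new has to be shown for the additional property, as it holds on all of $\bigH^{\otimes n}$ and therefore on the subspace.

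The only genuine computation is the expansion of the formula. First I would write $u_1 \circ \cdots \circ u_n = \left(\sum_{\sigma \in S_n} u_{\sigma(1)} \otimes \cdots \otimes u_{\sigma(n)}\right)\frac{1}{n!}$, and likewise for the $v$'s, and then use biadditivity of the $\bigH^{\otimes n}$ inner product to split the pairing into a double sum over $\sigma, \tau \in S_n$. Because the scalar $1/n!$ is real, the homogeneity rule pulls both copies out as the single real constant $1/n!^2$ (here $\overline{1/n!}=1/n!$), yielding
\[
\langle u_1 \circ \cdots \circ u_n, v_1 \circ \cdots \circ v_n \rangle = \frac{1}{n!^2}\sum_{\sigma, \tau \in S_n} \langle u_{\sigma(1)} \otimes \cdots \otimes u_{\sigma(n)}, v_{\tau(1)} \otimes \cdots \otimes v_{\tau(n)}\rangle.
\]
Applying Proposition \ref{prop 4.2} to each of the $n!^2$ summands on the right replaces every tensor pairing by the corresponding iterated inner product appearing in the right-hand side of the statement of that proposition, which is precisely the asserted formula.

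I would then briefly address well-definedness, since a reader might worry that the representation of an element of $\bigH^{\circ n}$ as a symmetric product $u_1 \circ \cdots \circ u_n$ is far from unique and the right-hand formula could a priori depend on the chosen representatives. This concern is vacuous in the present setup precisely because the inner product is defined as the restriction from $\bigH^{\otimes n}$: the value $\langle w, w'\rangle$ depends only on the tensors $w, w' \in \bigH^{\otimes n}$, and two symmetric products representing the same symmetric tensor are literally equal in $\bigH^{\otimes n}$. The displayed identity is thus a computed consequence, not an independent definition, so no consistency check is required.

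The step demanding the most attention is bookkeeping rather than conceptual: keeping the order of factors and the placement of conjugates correct when the real scalars $1/n!$ are passed through the non-commutative homogeneity rule, and indexing the double sum over $\sigma$ and $\tau$ consistently. There is no real analytic or algebraic obstacle, since the substantive work---producing a consistent quaternionic inner product on the tensor powers together with the additional property---was already carried out in Proposition \ref{prop 4.2}, and the present statement only transports it to the symmetric subspace.
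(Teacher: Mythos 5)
Your proposal is correct, but it is organized differently from the paper's own (one-line) proof. The paper simply says the result ``follows as in the proof of Proposition \ref{prop 4.2}'', i.e.\ it re-runs the same inductive verification of quaternionic homogeneity, conjugate symmetry, the additional property, additivity and positivity directly on the nested formula. You instead invoke the \emph{conclusion} of Proposition \ref{prop 4.2} rather than its method: since $\bigH^{\circ n}$ is by definition a (closed) subspace of $\bigH^{\otimes n}$, the already-constructed inner product restricts, every axiom and the additional property are inherited for free, and the displayed formula is then obtained by expanding $u_1\circ\cdots\circ u_n=\bigl(\sum_{\sigma\in S_n}u_{\sigma(1)}\otimes\cdots\otimes u_{\sigma(n)}\bigr)\frac{1}{n!}$ via biadditivity and the fact that the real scalar $1/n!$ passes through the quaternionic homogeneity rule. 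This buys you something the paper's terse proof leaves implicit: well-definedness is automatic, because the formula is a computed consequence of the restriction rather than an independent definition that must be checked against different representations of the same symmetric tensor, and definiteness on the subspace is immediate. The paper's route, by contrast, avoids any appeal to the ambient space and treats the symmetric formula as a self-contained object, at the cost of repeating the induction. Both arguments are sound and lean on the same underlying computation from Proposition \ref{prop 4.2}.
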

\begin{proof} The result follows as in the proof of Proposition \ref{prop 4.2}.
\end{proof}
In the classical case (where $\bigH$ is a Hilbert space over the field $\mathbb R$ or $\mathbb C$),
another natural inner-product is usually being used, namely the symmetric inner product. It is defined by
\[
\langle u_1 \circ \cdots \circ u_n , v_1  \circ \cdots\circ v_n \rangle  =per\left(\langle u_i, v_j \rangle\right),
\]
where $per(A)$ is called the permanent of $A$ and has the same definition
as a determinant, with the exception that the factor $sgn(\sigma)$ is omitted.
An easy computation implies that when restricted to the $n$-fold symmetric tensor power $\bigH^{\circ n}$,
the second inner product (i.e. the symmetric inner product) is simply $n!$ times the first inner product
(the one which is defined in Proposition \ref{tensorInner}). This gives rise to the following definition.
\begin{defn}
Let $\langle \cdot, \cdot \rangle$ be the inner product of $\bigH$, and assume that it satisfies also the additional property
\[
\langle u , \lambda v\rangle =\langle \overline{\lambda} u ,  v\rangle.
\]
Then, the symmetric inner product on
 $\bigH^{\circ n}$ is defined by,
\[
\begin{split}
\langle u_1 \circ \cdots & \circ u_n , v_1  \circ \cdots
\circ v_n \rangle \\
&=\frac{1}{n!}\sum_{\sigma,\tau \in S_n}\langle \langle\cdots\langle \langle \langle u_{\sigma(1)},v_{\tau(1)}\rangle u_{\sigma(2)}, v_{\tau(2)} \rangle u_{\sigma(3)}, v_{\tau(3)} \rangle  \cdots \rangle u_{\sigma(n)}, v_{\tau(n)} \rangle.
\end{split}
\]
\end{defn}

We now focus on the special case of the symmetric Fock space $\bigF^\circ(\bigH)$ where $p$ is a quaternion variable and $\bigH=p \mathbb H$. When no confusion can arise, we will simply denote it by $\bigF^\circ(\mathbb H)$. The following result shows the relation with the Fock space as introduced in Definition \ref{Fock_S-B}, see Proposition \ref{Fock1}.

\begin{prop}\label{seriesFock}
$\bigF^\circ(\mathbb H)$ is the space of all entire functions
\[
\sum_{n=0}^\infty p^n a_n
\]
satisfying
$\sum_{n=0}^\infty |a_n|^2 n! <\infty$, under an identification of $p^{\circ n}$ with $p^n$.
\end{prop}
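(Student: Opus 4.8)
The plan is to specialize the general construction to $\bigH=p\mathbb H$, taken as the \emph{one-dimensional} two-sided quaternionic Hilbert space generated by $p$ with $\langle p,p\rangle=1$ and left action $\lambda(p\alpha)=p(\lambda\alpha)$, and to show that each symmetric power $\bigH^{\circ n}$ collapses to the rank-one right $\mathbb H$-module spanned by $p^{\circ n}$, on which the symmetric inner product gives $\langle p^{\circ n},p^{\circ n}\rangle=n!$. Matching this against Proposition \ref{Fock1} then gives the statement. Throughout I would use the symmetric inner product (not the one of Proposition \ref{tensorInner}), since only the former produces the factor $n!$ occurring in the Fock norm.

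First I would analyze the algebraic structure of $\bigH^{\otimes n}$. Using the balanced property $u\lambda\otimes v=u\otimes\lambda v$ of the tensor product over $\mathbb H$ together with the left action on $\bigH$, a typical generator collapses as
\[
p\alpha_1\otimes\cdots\otimes p\alpha_n=p^{\otimes n}(\alpha_1\cdots\alpha_n),
\]
so $\bigH^{\otimes n}$ is the free rank-one right $\mathbb H$-module on $p^{\otimes n}$. Symmetrizing, $p^{\circ n}=\tfrac{1}{n!}\sum_{\sigma\in S_n}p^{\otimes n}=p^{\otimes n}$, and more generally $p\alpha_1\circ\cdots\circ p\alpha_n=p^{\circ n}\big(\tfrac{1}{n!}\sum_{\sigma\in S_n}\alpha_{\sigma(1)}\cdots\alpha_{\sigma(n)}\big)$; hence $\bigH^{\circ n}$ is again one-dimensional, spanned by $p^{\circ n}$, and every element of $\bigF^\circ(\mathbb H)=\oplus_{n}\bigH^{\circ n}$ has the form $\sum_n p^{\circ n}a_n$ with $a_n\in\mathbb H$.

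Next I would compute the norm of the generator. Taking $u_i=v_i=p$ and $\langle p,p\rangle=1$, every nested bracket in the symmetric inner product evaluates to $1$, so the double sum over $S_n\times S_n$ contributes $(n!)^2$; dividing by $n!$ yields
\[
\langle p^{\circ n},p^{\circ n}\rangle=\frac{1}{n!}\sum_{\sigma,\tau\in S_n}1=n!.
\]
Equivalently, the defining matrix $(\langle p,p\rangle)_{i,j}$ is the $n\times n$ all-ones matrix, whose permanent is $n!$. Since the summands $\bigH^{\circ n}$ are mutually orthogonal in the direct sum, this gives $\big\|\sum_n p^{\circ n}a_n\big\|^2=\sum_n n!\,|a_n|^2$, mirroring exactly the orthogonality $\langle p^n,p^m\rangle=n!\,\delta_{nm}$ of the monomials in $\mathcal{F}(\mathbb H)$.

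Finally I would conclude: under the identification $p^{\circ n}\leftrightarrow p^n$ the map $\sum_n p^{\circ n}a_n\mapsto\sum_n p^n a_n$ is a norm-preserving bijection onto the space of entire functions with $\sum_n n!\,|a_n|^2<\infty$, which by Proposition \ref{Fock1} is precisely $\mathcal{F}(\mathbb H)$. The main obstacle is the second step, not the arithmetic: one must check carefully that the non-commutative bimodule bookkeeping really forces $\bigH^{\otimes n}$, and hence $\bigH^{\circ n}$, to be one-dimensional with $p^{\circ n}$ a nonzero generator, since over the skew field $\mathbb H$ the collapse $p\alpha_1\otimes\cdots\otimes p\alpha_n=p^{\otimes n}(\alpha_1\cdots\alpha_n)$ depends essentially on the chosen left action.
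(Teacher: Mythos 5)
Your proof is correct and follows essentially the same route as the paper's: the paper likewise observes that every element of $\bigH^{\circ n}$ has the form $p^{\circ n}a$ for some $a\in\mathbb H$ and computes $\langle p^{\circ n},p^{\circ n}\rangle=\frac{1}{n!}\sum_{\sigma,\tau\in S_n}1=n!$ using the symmetric inner product, then matches against Proposition \ref{Fock1}. Your additional care in justifying the collapse $p\alpha_1\otimes\cdots\otimes p\alpha_n=p^{\otimes n}(\alpha_1\cdots\alpha_n)$ via the balanced tensor property, and in making the normalization $\langle p,p\rangle=1$ explicit, only fills in details the paper leaves as ``clearly.''
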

\begin{proof}[Proof]
Clearly, any element in the $n$-th level $\bigH^{\circ n}$ can be written as $p^{\circ n} a$ for some $a \in \mathbb H$, and
\[
\langle p^{\circ n},p^{\circ n} \rangle =
\frac{1}{n!}\sum_{\sigma,\tau \in S_n}\langle \langle\cdots\langle \langle \langle p,p\rangle p,p \rangle p,p \rangle  \cdots \rangle p,p \rangle = n!
\]
\end{proof}

\section{The slice monogenic case}
\label{sec5}
\setcounter{equation}{0}

In this section we recall just the definition and some properties of slice monogenic functions
and  we show how the results obtained in Section 2 can be reformulated in this case.
We work with the real Clifford algebra $\rr_n$ over $n$ imaginary units
$e_1,\dots,e_n$ satisfying the relations $e_ie_j+e_je_i=-2\delta_{ij}$. An element in the
Clifford algebra $\rr_n$ is of the form $\sum_A e_Ax_A$ where
$A=i_1\ldots i_r$, $i_\ell\in \{1,2,\ldots, n\}$, $i_1<\ldots <i_r$ is a multi-index,
$e_A=e_{i_1} e_{i_2}\ldots e_{i_r}$ and $e_\emptyset =1$. We set $|A|=i_1+\ldots +i_r$ and we call $k$-vectors the elements of the form $\sum_{A,\, |A|=k} e_Ax_A$, if $k>0$.
In the Clifford algebra $\mathbb{R}_n$, we can identify some specific elements with the vectors in the Euclidean space $\mathbb{R}^{n+1}$:
an element $(x_0,x_1,\ldots,x_n)\in \rr^{n+1}$  will be identified with the element
 $
 x=x_0+\underline{x}=x_0+ \sum_{j=1}^nx_je_j
 $
called, in short, paravector.
The norm of $x\in\rr^{n+1}$ is defined as $|x|^2=x_0^2+x_1^2+\ldots +x_n^2$. The real part  $x_0$ of $x$ will be also denoted by ${\rm Re}(x)$.
Using the above identification, a function $f:\ U\subseteq \rr^{n+1}\to\rr_n$ is seen as
a function $f(x)$ of the paravector $x$.
We will denote by $\mathbb{S}$ the $(n-1)$-dimensional sphere of unit 1-vectors in $\mathbb{R}^n$, i.e.
$$
\mathbb{S}=\{ e_1x_1+\ldots +e_nx_n\ :\  x_1^2+\ldots +x_n^2=1\}.
$$
Note that to any nonreal paravector $x=x_0+e_1x_1+\ldots +e_nx_n$ we can associate a $(n-1)$-dimensional sphere defined as the set, denoted by $[x]$, of elements of the form
$x_0+I|e_1x_1+\ldots +e_nx_n|$ when $I$ varies in $\mathbb S$.\\
As it is well known, for $n\geq 3$ the Clifford algebra $\mathbb{R}_n$ contains zero divisors. Thus, in general, the result which hold in the quaternionic setting do not necessarily hold in Clifford algebra. For this reasons, we quickly revise the definitions and results given for the quaternionic Fock space. We omit the proofs since, as the reader may easily check, the proofs given in the quaternionic case  are valid also in this setting.\\
We begin by giving the definition of slice monogenic functions (see \cite{bookfunctional}).
\begin{defn}
Let $U\subseteq\rr^{n+1}$ be an open set and let
$f:\ U\to\rr_n$ be a real differentiable function. Let
$I\in\mathbb{S}$ and let $f_I$ be the restriction of $f$ to the
complex plane $\mathbb{C}_I$.
We say that $f$ is a (left) slice monogenic function if for every
$I\in\mathbb{S}$, we have
$$
\frac{1}{2}\left(\frac{\partial }{\partial u}+I\frac{\partial
}{\partial v}\right)f_I(u+Iv)=0,
$$
on $U\cap \mathbb{C}_I$. The set of (left) slice monogenic functions on $U$ will be denoted by $\mathcal{SM}(U)$.
\end{defn}

The slice monogenic Fock spaces and their properties are as follows.

\begin{defn}[Slice hyperholomorphic Clifford-Fock space]
Let $I$ be any element in $\mathbb{S}$. Consider the set
$$
\mathcal{F}(\mathbb{R}^{n+1})=\{f\in \mathcal{SM}(\mathbb{R}^{n+1})\ |\  \int_{\C_I} e^{-|x|^2} |f_I(x)|^2 d\sigma(u,v)<\infty\}
$$
where $x=u+Iv$, $d\sigma(u,v):=\frac{1}{\pi}dudv$.
We will call $\mathcal{F}(\mathbb{R}^{n+1})$ (slice hyperholomorphic) Clifford-Fock space.
\end{defn}
We endow $\mathcal{F}(\mathbb{R}^{n+1})$ with the inner product (which does not depend on the choice of the imaginary unit $I\in\mathbb{S}$):
\begin{equation}\label{innerCLIF}
\langle f, g\rangle:=\int_{\C_I} e^{-|x|^2} \overline{g_I(x)}f_I(x) d\sigma(u,v).
\end{equation}

\begin{prop}
The Clifford-Fock space $\mathcal{F}(\mathbb{R}^{n+1})$ contains the monomials $x^m$, $m\in\mathbb{N}$ which form an orthogonal basis, where $x$ is a paravector in $\mathbb R^{n+1}$.
\end{prop}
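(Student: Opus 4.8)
The plan is to transcribe, \emph{mutatis mutandis}, the argument used for the quaternionic monomials, the essential point being that every computation takes place on a single commutative slice $\C_I$, so that the zero divisors of $\rr_n$ (present for $n\ge 3$) never interfere. First I would fix an arbitrary $I\in\mathbb{S}$ and, writing $x=\rho e^{I\theta}$ in polar coordinates on $\C_I$, compute
\[
\langle x^n,x^m\rangle=\int_{\C_I}e^{-|x|^2}\overline{x^m}x^n\,d\sigma(u,v)=\frac{1}{\pi}\int_0^{2\pi}\int_0^{+\infty}e^{-\rho^2}\rho^{m+n+1}e^{I(n-m)\theta}\,d\rho\,d\theta.
\]
The crucial observation is that $x$, $\bar x$ and $e^{I\theta}$ all lie in the commutative subfield $\C_I\cong\C$, so that $\overline{x^m}x^n=\rho^{m+n}e^{I(n-m)\theta}$ exactly as in the complex case. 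The angular integral $\int_0^{2\pi}e^{I(n-m)\theta}\,d\theta$ vanishes for $n\ne m$ and equals $2\pi$ for $n=m$, giving orthogonality, while the radial integral yields $\langle x^n,x^n\rangle=n!$; in particular every monomial lies in $\mathcal{F}(\rr^{n+1})$.

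Next I would establish that the monomials generate the space. Any $f\in\mathcal{F}(\rr^{n+1})$ is slice monogenic on all of $\rr^{n+1}$, hence entire, and therefore admits a power series expansion $f(x)=\sum_{m=0}^{\infty}x^m a_m$ with $a_m\in\rr_n$, converging uniformly on compact sets. This shows the $x^m$ are generators.

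For linear independence I would compute, for each $n$,
\[
\langle f,x^n\rangle=\int_{\C_I}e^{-|x|^2}\overline{x^n}\Big(\sum_{m=0}^{\infty}x^m a_m\Big)d\sigma(u,v)=\sum_{m=0}^\infty\Big(\int_{\C_I}e^{-|x|^2}\overline{x^n}x^m\,d\sigma\Big)a_m=n!\,a_n,
\]
where the interchange of sum and integral is justified by uniform convergence on the disks $\{|x|<r\}$ together with a limiting argument exactly as in Proposition \ref{independence}. The point to stress is that the scalar factors $\int_{\C_I}e^{-|x|^2}\overline{x^n}x^m\,d\sigma$ equal $n!\,\delta_{nm}$, a real number, so the Clifford-valued coefficients $a_m$ enter only linearly and multiplication by a possible zero divisor never occurs. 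Hence $\langle f,x^n\rangle=0$ for all $n$ forces $a_n=0$ for all $n$, i.e. $f\equiv 0$, and the orthogonal family $\{x^n\}$ is a basis.

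The one point demanding care compared with the literal quaternionic proof is precisely this zero-divisor issue: one must check that $|f_I(x)|^2$ is understood as the Euclidean norm on $\rr_n$ and that no step secretly requires $\overline{a}\,a=|a|^2$ to be a scalar for a general $a\in\rr_n$. Since all nontrivial algebra is confined to the commutative plane $\C_I$, and the coefficients appear only through real scalar factors and, for the convergence estimate, through their moduli, the quaternionic computation carries over verbatim.
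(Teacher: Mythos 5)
Your proposal is correct and follows essentially the same route as the paper: the paper explicitly omits the proof in the Clifford setting, stating that the quaternionic argument (polar coordinates on a fixed slice $\C_I$ for orthogonality and the norm $n!$, power series expansion for generation, and $\langle f,x^n\rangle=n!\,a_n$ for independence) carries over verbatim, which is exactly what you transcribe. Your added remark that the zero divisors of $\rr_n$ cause no trouble because the scalar factors $n!\,\delta_{nm}$ are real is a sensible explicit justification of the paper's "as the reader may easily check."
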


Starting from the function $e^{zy}=\sum_{m=0}^{+\infty} \frac{z^my^m}{m!}$, holomorphic in $z$ that can be interpreted as an element on a complex plane $\C_I$
we can extend it to a slice monogenic function as
$$
{\rm ext}(e^{zy})=\frac 12 ( 1- I_xI ) \sum_{m=0}^{+\infty} \frac{z^my^m}{m!}+ \frac 12 ( 1+ I_xI )  \sum_{m=0}^{+\infty}\frac{{\bar z}^my^m}{m!}=e_\star^{xy}
$$
and since $y$ is arbitrary, we get the  function we need.
We now consider the function $k_y(x):=e_\star^{x\bar y}$ and we have the reproducing property in the Clifford-Fock space.
\begin{thm}
For every $f\in\mathcal{F}(\mathbb{R}^{n+1})$ we have
$$
\langle f, k_x \rangle= f(x).
$$
Moreover, $\langle k_x,k_y\rangle= e_\star^{y\bar{x}}$.
\end{thm}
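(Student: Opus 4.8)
The plan is to transcribe the proof of the reproducing theorem from the quaternionic case, since every step survives once one checks that nothing uses the division-algebra structure of $\mathbb H$. Throughout I would fix a slice $\C_I$ and write the function variable as $w = u + Iv$ with $I^2 = -1$, so that $f_I$ is defined on a copy of the complex plane and the weight is $e^{-|w|^2} = e^{-(u^2+v^2)}$; this reduction to a single slice is what lets the computation proceed as in the commutative case.

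First I would record the power series of the kernel and of its conjugate. From $k_x(w) = e_\star^{w\bar x} = \sum_{n=0}^\infty \frac{w^n \bar x^n}{n!}$ and the fact that conjugation is an anti-automorphism, $\overline{ab} = \bar b\,\bar a$, I get termwise $\overline{k_x(w)} = \sum_{n=0}^\infty \frac{x^n \bar w^n}{n!}$. Writing a generic $f \in \mathcal F(\mathbb R^{n+1})$ as $f(w) = \sum_{m=0}^\infty w^m a_m$, I substitute both series into \eqref{innerCLIF}, exchange the series (uniformly convergent on $|w| < r$) with the integral, and reduce to
\[
\langle f, k_x\rangle = \sum_{n,m} \frac{x^n}{n!}\,\langle w^m, w^n\rangle\, a_m .
\]
The orthogonality relation $\langle w^m, w^n\rangle = n!\,\delta_{nm}$ --- the content of the Proposition stating that the monomials form an orthogonal basis of $\mathcal F(\mathbb R^{n+1})$ --- then collapses the double sum to $\sum_n x^n a_n = f(x)$, giving the reproducing property. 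Note that the ordering of the non-commuting factors $x^n$ and $a_m$ around the inner product causes no trouble precisely because $\langle w^m, w^n\rangle$ turns out to be the real scalar $n!\,\delta_{nm}$.

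The second identity I would deduce from the first applied to $f = k_x$. Membership $k_x \in \mathcal F(\mathbb R^{n+1})$ follows from the coefficient characterization (the analogue of Proposition \ref{Fock1}): the coefficients $a_m = \bar x^m/m!$ satisfy $\sum_m |a_m|^2 m! = \sum_m |x|^{2m}/m! = e^{|x|^2} < \infty$, where $|x^m| = |x|^m$ holds because a single paravector $x = x_0 + \underline{x}$ generates a commutative subalgebra isomorphic to $\C$ (as $\underline{x}^2 = -|\underline{x}|^2 \in \mathbb R$). Hence $\langle k_x, k_y\rangle = k_x(y) = e_\star^{y\bar x}$; equivalently one may repeat the direct computation as in the quaternionic proof.

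The only genuinely delicate point --- and the single place where the passage from $\mathbb H$ to $\mathbb R_n$ might conceivably break for $n \ge 3$ --- is to confirm that the zero divisors of $\mathbb R_n$ never intervene. They do not: the argument uses only the orthogonality of the monomials, the anti-automorphism property of conjugation, and the fact that all quantities $x^n, \bar x^n, |x|^{2n}$ built from one fixed paravector lie in a single commutative copy of $\C$. No inverse is taken and no product of mismatched slices occurs, so the computation is entirely insensitive to zero divisors and the quaternionic proof transfers verbatim.
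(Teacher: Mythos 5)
Your proposal is correct and follows essentially the same route as the paper, which for this theorem simply states that the quaternionic proof (series expansion of $\overline{k_x}$, exchange of sum and integral, and orthogonality $\langle w^m,w^n\rangle=n!\,\delta_{nm}$) carries over verbatim to the slice monogenic setting; your explicit check that only the anti-automorphism property of conjugation and the commutative subalgebra generated by a single paravector are used is precisely the verification the paper leaves to the reader. The only cosmetic difference is that you obtain $\langle k_x,k_y\rangle=e_\star^{y\bar x}$ by applying the reproducing property to $f=k_x$ (after verifying $k_x\in\mathcal{F}(\mathbb{R}^{n+1})$ via the coefficient criterion), whereas the paper's quaternionic model repeats the direct series computation, an alternative you also note.
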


\begin{prop}
A function $f(x)=\sum_{m=0}^{+\infty} x^m a_m$, $a_m\in \mathbb{R}_n$ for $m\in \mathbb{N}$, belongs to $\mathcal{F}(\mathbb{R}^{n+1})$ if and only if $\sum_{m=0}^{+\infty}  |a_m|^2 m!<\infty$.
\end{prop}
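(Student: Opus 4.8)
The plan is to prove the stronger identity
\[
\int_{\C_I} e^{-|x|^2}|f_I(x)|^2\,d\sigma(u,v)=\sum_{m=0}^\infty |a_m|^2\,m!
\]
as an equality in $[0,+\infty]$, from which both implications follow at once. Indeed, a function in $\mathcal{F}(\mathbb{R}^{n+1})$ is slice monogenic on all of $\mathbb{R}^{n+1}$, hence entire with a globally convergent expansion $\sum_m x^m a_m$, so the left-hand side being finite forces $\sum_m|a_m|^2 m!<\infty$; conversely $\sum_m|a_m|^2 m!<\infty$ makes $|a_m|=o(1/\sqrt{m!})$, so the series has infinite radius of convergence and defines an entire function of finite norm. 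This is the verbatim analogue of Proposition \ref{Fock1}, and I would reproduce that argument while isolating the one place where the Clifford structure genuinely enters.

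First I fix $I\in\mathbb{S}$ and pass to polar coordinates $x=\rho e^{I\theta}=\rho(\cos\theta+I\sin\theta)$ on $\C_I$, so that $d\sigma(u,v)=\frac1\pi\rho\,d\rho\,d\theta$ and
\[
f_I(\rho e^{I\theta})=\sum_{m=0}^\infty \rho^m e^{Im\theta}a_m=\sum_{m=0}^\infty \rho^m\big(\cos(m\theta)\,a_m+\sin(m\theta)\,Ia_m\big).
\]
The crux is the angular identity $\int_0^{2\pi}|f_I(\rho e^{I\theta})|^2\,d\theta=2\pi\sum_{m=0}^\infty \rho^{2m}|a_m|^2$; granting it, the radial integral $\tfrac1\pi\int_0^\infty e^{-\rho^2}(2\pi\sum_m\rho^{2m}|a_m|^2)\rho\,d\rho$ reduces by Tonelli and the substitution $t=\rho^2$, using $\int_0^\infty e^{-\rho^2}\rho^{2m+1}\,d\rho=\tfrac12 m!$, to $\sum_m|a_m|^2 m!$, exactly as in the quaternionic case.

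The step that requires care — and the reason this is not literally the quaternionic computation — is that $|\cdot|^2$ here is the genuine Euclidean norm on $\mathbb{R}_n\cong\mathbb{R}^{2^n}$, and once $\mathbb{R}_n$ carries zero divisors one no longer has $\bar w w=|w|^2$. I would therefore expand $|f_I|^2$ through the real bilinear form $(u,v)$ on $\mathbb{R}_n$ (the scalar part of $\bar u v$) rather than through Clifford conjugation. This produces four families of cross terms, $(a_m,a_n)$, $(a_m,Ia_n)$, $(Ia_m,a_n)$ and $(Ia_m,Ia_n)$, weighted by products of $\cos(m\theta)$ and $\sin(n\theta)$; integrating in $\theta$ annihilates all mixed-frequency contributions and all $\cos$--$\sin$ contributions of equal frequency by orthogonality of the trigonometric system, leaving $\int_0^{2\pi}\big[\cos^2(m\theta)(a_m,a_m)+\sin^2(m\theta)(Ia_m,Ia_m)\big]\,d\theta$.

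To close, I would invoke the fact that left multiplication by a unit $1$-vector $I\in\mathbb{S}$ is a Euclidean isometry of $\mathbb{R}_n$: since $\bar I=-I$ and $I^2=-1$ give $\bar I I=1$, one has $\overline{Iu}\,Iv=\bar u(\bar I I)v=\bar u v$, whose scalar part yields $(Iu,Iv)=(u,v)$, and in particular $|Ia_m|^2=|a_m|^2$. Thus each frequency $m\ge 1$ contributes $\pi(|a_m|^2+|a_m|^2)=2\pi|a_m|^2$ (and the term $m=0$ contributes $2\pi|a_0|^2$), which is precisely the claimed angular identity. I expect this isometry observation to be the only genuinely new ingredient beyond Proposition \ref{Fock1}; everything else — the uniform convergence on compact sets justifying the term-by-term integration, and the Gamma-integral evaluation — transfers unchanged from the quaternionic setting.
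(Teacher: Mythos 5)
Your proposal is correct, and its overall route is the same as the one the paper takes for the quaternionic Proposition \ref{Fock1} (polar coordinates on $\mathbb{C}_I$, an angular Parseval-type identity, then the radial Gamma integral $\int_0^\infty e^{-\rho^2}\rho^{2m+1}\,d\rho=\tfrac12 m!$); for the Clifford statement the paper simply declares that the quaternionic proof carries over and omits it. Where you genuinely add something is in isolating and justifying the one step that does \emph{not} transfer verbatim: in $\mathbb{R}_n$ with $n\ge 3$ the algebra has zero divisors, the norm is no longer multiplicative, and one cannot blindly write $\overline{w}w=|w|^2$, so the identity $\int_0^{2\pi}|f_I(\rho e^{I\theta})|^2\,d\theta=2\pi\sum_m\rho^{2m}|a_m|^2$ needs an argument. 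Your replacement — expanding $|\cdot|^2$ via the real Euclidean form $(u,v)=[\bar u v]_0$ on $\mathbb{R}_n\cong\mathbb{R}^{2^n}$, killing the cross terms by trigonometric orthogonality, and then using that left multiplication by $I\in\mathbb{S}$ is a Euclidean isometry (since $\bar I I=1$ gives $\overline{Iu}\,Iv=\bar u v$, hence $|Ia_m|=|a_m|$) — is exactly the missing ingredient, and it is correct. So the two arguments prove the same chain of equalities; the paper's buys brevity by appealing to the quaternionic computation, while yours buys an explicit verification that the appeal is legitimate precisely at the point where the Clifford structure could have caused trouble.
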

In the case of modules over $\mathbb R_n$ the full Fock module is still under investigation.


\end{document}